\documentclass[10.5pt,reqno]{amsart}
\usepackage{longtable} 
\usepackage{hyperref}
\usepackage[T1]{fontenc}
\usepackage[utf8]{inputenc}
\usepackage[english]{babel}
\usepackage{textcomp}
\usepackage{dsfont}
\usepackage{latexsym}
\usepackage{amssymb}
\usepackage{amsthm}
\usepackage{amsmath}
\DeclareMathAlphabet{\mathpzc}{OT1}{pzc}{m}{en}
\usepackage{yfonts}
\usepackage{xfrac}
\usepackage{newlfont}
\usepackage{graphicx}
\usepackage{mathtools}
\usepackage{comment}
\usepackage{indentfirst}
\usepackage{braket}
\usepackage{mathrsfs}
\usepackage{xcolor}

\usepackage{etoolbox}

\textwidth16.5cm
\textheight21cm
\evensidemargin.2cm
\oddsidemargin.2cm

\addtolength{\headheight}{3.2pt}     

\usepackage{scalerel}[2014/03/10]
\usepackage[usestackEOL]{stackengine}
\newcommand{\dashint}{\,\ThisStyle{\ensurestackMath{%
			\stackinset{c}{.2\LMpt}{c}{.5\LMpt}{\SavedStyle-}{\SavedStyle\phantom{\int}}}%
		\setbox0=\hbox{$\SavedStyle\int\,$}\kern-\wd0}\int}

\DeclareMathOperator{\card}{Card}

\DeclareMathOperator{\pr}{pr}
\DeclareMathOperator{\rk}{rk}

\DeclareMathOperator{\Cl}{Cl}
\DeclareMathOperator{\Tr}{Tr}

\renewcommand{\Re}{\mathrm{Re}\,}
\renewcommand{\Im}{\mathrm{Im}\,}

\newcommand{\ee}{\mathrm{e}}

\newcommand{\Aut}{\mathrm{Aut}}

\newcommand{\loc}{\mathrm{loc}}

\newcommand{\dd}{\mathrm{d}}

\DeclarePairedDelimiter{\abs}{\lvert}{\rvert}

\DeclarePairedDelimiter{\norm}{\lVert}{\rVert}

\let\originalleft\left
\let\originalright\right
\renewcommand{\left}{\mathopen{}\mathclose\bgroup\originalleft}
\renewcommand{\right}{\aftergroup\egroup\originalright}

\newcommand{\N}{\mathds{N}}

\newcommand{\Db}{\mathds{D}}
\newcommand{\C}{\mathds{C}}

\newcommand{\R}{\mathds{R}}

\newcommand{\T}{\mathds{T}}
\newcommand{\bD}{{\mathrm{b}D}}

\newcommand{\Cc}{\mathcal{C}}

\newcommand{\Hc}{\mathcal{H}}

\newcommand{\Pc}{\mathcal{P}}

\newcommand{\meg}{\leqslant}
\newcommand{\Meg}{\geqslant}
\newcommand{\eps}{\varepsilon}
\renewcommand{\phi}{\varphi}
\newcommand{\mi}{\mu}

\title[Clark Measures on Symmetric Domains]{Clark Measures Associated with Rational Inner Functions on Bounded Symmetric Domains}

\begin{document}

\theoremstyle{definition}
\newtheorem{deff}{Definition}[section]

\newtheorem{oss}[deff]{Remark}

\newtheorem{ass}[deff]{Assumptions}

\newtheorem{nott}[deff]{Notation}

\newtheorem{ex}[deff]{Example}

\theoremstyle{plain}
\newtheorem{teo}[deff]{Theorem}

\newtheorem{lem}[deff]{Lemma}

\newtheorem{prop}[deff]{Proposition}

\newtheorem{cor}[deff]{Corollary}

\author[M. Calzi]{Mattia Calzi}

\address{Dipartimento di Matematica, Universit\`a degli Studi di
	Milano, Via C. Saldini 50, 20133 Milano, Italy}
\email{{\tt mattia.calzi@unimi.it}} 

\keywords{Symmetric  domains, plurihamonic functions, Herglotz theorem, Clark measures.}
\thanks{{\em Math Subject Classification 2020}: 32M15, 31C10 
}
\thanks{The author is a member of the 	Gruppo Nazionale per l'Analisi
	Matematica, la Probabilit\`a e le	loro Applicazioni (GNAMPA) of
	the Istituto Nazionale di Alta Matematica (INdAM). The author was partially funded by the INdAM-GNAMPA Project CUP\_E53C22001930001.
} 

\begin{abstract}
	Given a bounded symmetric domain $D$ in $\C^n$, we consider the Clark measures $\mi_\alpha$, $\alpha\in \T$, associated with a rational inner function $\phi$ from $D$ into the unit disc   in $\C$. We show that $\mi_\alpha=c\abs{\nabla \phi}^{-1}\chi_{\bD \cap \phi^{-1}(\alpha)}\cdot \Hc^{m-1}$, where $m$ is the dimension of the \v Silov boundary $\bD$ of $D$ and $c$ is a suitable constant.  Denoting with $H^2(\mi_\alpha)$ the closure of the space of holomorphic polynomials in $L^2(\mi_\alpha)$, we characterize the $\alpha$ for which $H^2(\mi_\alpha)=L^2(\mi_\alpha)$ when $D$ is a polydisc; we also provide some necessary and some sufficient conditions for general domains.	
\end{abstract}
\maketitle

\section{Introduction}

Denote with $\Db$  the unit disc in $\C$, and take a holomorphic function $\phi\colon \Db \to \Db$. Then, for every $\alpha$ in the torus $\T$,
\[
\Re\left( \frac{\alpha+ \phi}{\alpha-\phi} \right)=\frac{1-\abs{\phi}^2}{\abs{\alpha-\phi}^2}
\]
is a positive harmonic function. By the Riesz--Herglotz representation theorem, there is a unique positive Radon measure $\mi_\alpha$ on $\T$ such that
\[
\Re\left( \frac{\alpha+ \phi(z)}{\alpha-\phi(z)} \right)=(\Pc \mi_\alpha)(z)=\int_\T \frac{1-\abs{z}^2}{\abs{\alpha'-z}^2}\,\dd \mi_\alpha(\alpha')
\]
for every $z\in \Db$, where $\Pc$ denotes the Poisson integral operator (on $\T$). 
The measures $\mi_\alpha$, $\alpha\in \T$ are called Clark (or Aleksandrov--Clark) measures, and were introduced by D.\ N.\ Clark in~\cite{Clark} in order to study the restricted shift operator. Their main properties were then established by A.\ B.\ Aleksandrov (cf., e.g.,~\cite{Aleksandrov1,Aleksandrov2,Aleksandrov3,Aleksandrov4,Aleksandrov5}).
Among these, $\mi_\alpha$ is singular (with respect to the normalized Haar measure $\beta_\T$ on $\T$) if and only if $\phi$ is inner, that is, $\phi$ has radial limits of modulus $1$ at almost every point in $\T$. In addition, 
\[
\int_\T \mi_\alpha\,\dd \beta_\T(\alpha)=\beta_\T
\]
in the sense of integrals of positive measures. This is known as Aleksandrov's disintegration theorem, since its first formulation was only concerned with inner functions, in which case the above integral of measures is indeed a disintegration of $\beta_\T$ relative to its pseudo-image measure $\beta_\T$ under $\phi$ (more precisely, under the boundary value function associated with $\phi$). Cf., e.g.,~\cite{CimaMathesonRoss,PoltoratskiSarason} for an account of the classical theory of Clark measures.

Clark measures proved to be a valuable tool in the study of both the theory of holomorphic functions on $\Db$ and in the study of linear contractions.  Consequently, various extensions to more general domains have been introduced in the last decades. On the one hand, inspired by the link with the study of contractions, M.\ T.\ Jury and then R.\ T.\ W.\ Martin (cf.~\cite{Jury1,Jury2,JuryMartin}) developed a theory of operator-valued Clark measures which may be used to study row contractions. On the other hand, A.\ B.\ Aleksandrov and E.\ Doubtsov (cf.~\cite{Doubtsov,AleksandrovDoubtsov,AleksandrovDoubtsov2})  introduced a notion of Clark measures  associated with $\Db$-valued functions (instead of operator-valued functions as in the previous case) on polydiscs and on Euclidean unit balls. 
More recently, the author proposed an extension of this notion of Clark measures to $\Db$-valued functions on general bounded  symmetric domains, as well as in symmetric Siegel domains (cf.~\cite{BoundedClark,SiegelClark}). 
Recall that a (circular) bounded symmetric domain  $D$ is a bounded circular convex open subset of $\C^n$ such that every $z\in D$ is the unique fixed point of a holomorphic involution of $D$. Given a holomorphic function $\phi\colon D\to \Db$, the definition of the Clark measure $\mi_\alpha$ (according to~\cite{BoundedClark}) is then formally similar to the one in the case $D=\Db$. In fact, $\mi_\alpha$ is the unique positive Radon measure on the \v Silov boundary $\bD$ of $D$ (that is, the set of the extremal points of $D$) such that
\[
\Pc \mi_\alpha= \Re\left( \frac{\alpha+ \phi}{\alpha-\phi} \right)=\frac{1-\abs{\phi}^2}{\abs{\alpha-\phi}^2}
\]
on $D$, where $\Pc \mi_\alpha$ denotes the Poisson--Szeg\H o integral of $\mi_\alpha$, which is an higher-dimensional analogue of the usual Poisson integral. For example, if $D=\Db^n$, then $\bD=\T^n$ and
\begin{equation}\label{eq:3}
(\Pc \mi_\alpha)(z_1,\dots, z_n)=\int_{\T^n} \prod_{j=1}^n \frac{1-\abs{z_j}^2}{\abs{\alpha_j-z_j}^2}\,\dd \mi_\alpha(\alpha_1,\dots,\alpha_n)
\end{equation}
for every $(z_1,\dots, z_n)\in \Db^n$.

Even though these more general Clark measures enjoy several of the basic properties of the classical ones, some important results do not seem to hold in full generality. For example, when $D=\Db$, it is known that  $H^2(\mi_\alpha)$, that is, the closure of the space of holomorphic polynomials in $L^2(\mi_\alpha)$, equals $L^2(\mi_\alpha)$ if and only if $\phi$ is extremal in the unit ball of $H^\infty(\Db)$; in particular, this happens if $\phi$ is inner. 
On the one hand, when $D$ is the unit ball in $\C^n$, then $H^2(\mi_\alpha)= L^2(\mi_\alpha)$ for $\phi$ inner (cf.~\cite[Lemma 4.2]{AleksandrovDoubtsov}).\footnote{A holomorphic function $\phi\colon D\to \Db$ is inner if its radial boundary values on $\bD$ are unimodular almost everywhere.} On the other hand, this property is known to fail  for (some) inner functions already in the bidisc. 
Notice that the relevance of the equality $H^2(\mi_\alpha)= L^2(\mi_\alpha)$ is also related to the fact that it characterizes the injectivity of a natural surjective  partial isometry (the normalized Cauchy--Szeg\H o transform, at least when $\phi(0)=0$) from $L^2(\mi_\alpha)$ onto some generalized de Branges--Rovnyak space (cf.~\cite[Theorem 4.16]{BoundedClark}). In the classical case, this operator is the link between the theory of Clark measures and the theory of contractions; even though it is still unclear whether such an operator provides a  link to the theory of a reasonable class of operators also in the general case, investigating its properties may still be of some interest.

The study of Clark measures associated with rational inner functions on polydiscs was initiated in~\cite{BCS,ABBCS}. In these works, an explicit description of $\mi_\alpha$ was provided when $D$ is a bidisc, or even a polydisc in some cases. This description was then applied to characterize the $\phi$ and $\alpha$ for which $H^2(\mi_\alpha)= L^2(\mi_\alpha)$ when $D$ is the bidisc. 
In this paper we provide an extension of these results. 
In fact, in Theorem~\ref{teo:3} we provide an explicit description of $\mi_\alpha$ when $\phi$ is a rational inner function and $D$ is arbitrary. This description is the natural analogue of the known description of Clark measures associated with finite Blaschke products, that is, rational inner functions on $\Db$ (even though this fact is less apparent in~\cite{BCS,ABBCS}). 
We then characterize the $\phi$ and $\alpha$ for which $H^2(\mi_\alpha)= L^2(\mi_\alpha)$ when $D$ is a polydisc. For more general domains the situation is more intricate and we only provide some sufficient conditions in Proposition~\ref{prop:42} and some necessary conditions in Proposition~\ref{prop:40}. Even though these conditions do not provide a complete characterization, we are still able to show that the equality $H^2(\mi_\alpha)= L^2(\mi_\alpha)$ may fail for rational inner functions even when $D$ is irreducible (cf.~Example~\ref{ex:1}), contrary to what one may have hoped, since counterexamples were so far available only on reducible domains.

Here is a plan of the paper. 
In Section~\ref{sec:2}, we provide a description of $\mi_\alpha$ for general rational inner functions and a characterization of the equality  $H^2(\mi_\alpha)= L^2(\mi_\alpha)$ when $D$ is a polydisc. We have chosen to separate these results from the remaining ones since they may be stated and proved keeping the preliminaries to a minimum and may therefore be more accessible to the readers who are only interested in the case of polydiscs and are less acquainted with the theory of general symmetric domains.

In Section~\ref{sec:3}, we briefly recall some basic notions on Jordan triple systems which are necessary to deal with general symmetric domains.
In Section~\ref{sec:4}, we prove our sufficient and necessary conditions for the equality  $H^2(\mi_\alpha)= L^2(\mi_\alpha)$ for general symmetric domains, and  provide an example in which this equality fails even if the domain is irreducible.

\section{Rational Inner Functions on Polydiscs}\label{sec:2}

\begin{nott}\label{not:1}
	Throughout the paper, we shall use the following notation.
	We shall denote with $\Db$ the unit disc in $\C$ and by $\T$ its boundary.
	
	Given a polynomial $p$ (or a linear function) on $\C^n$, we shall define $\overline p\colon z \mapsto \overline{p(z)}$ and $p^\sharp \colon z \mapsto \overline{p(\overline z)}$.\footnote{Notice that, with this convention, the $\C$-linear  conjugate of a $\C$-linear function $L$ on $\C^n$ is    $L^\sharp$ and not $\overline L$.}
	
	We  normalize the Hausdorff measure $\Hc^k$ on $\C^n$ so that the measure of $k$-dimensional unit cubes is $1$ for $k=0,\dots, 2n$.
	
	We shall denote with $D$ a (convex) circular  bounded symmetric domain in $\C^n$, that is, an open convex subset of $\C^n$ such that $\T D=D$ and such that every $z\in D$ is the unique fixed point of an involutive biholomorphism of $D$. We shall define $r$ so that $\sup_{z\in D}\abs{z}^2=r$.\footnote{This choice is due to the fact that, in Section~\ref{sec:3} and~\ref{sec:4}, $r$ will be the rank of $D$.}
	
	We shall denote with $K$ the component of the identity of the group of linear automorphisms of $D$; we shall also assume that $K\subseteq U(n)$ (cf.~\cite[Corollary 3.16 and Theorem 4.1]{Loos}). 
	
	We shall denote with $\bD$ the set of extremal points of $D$, so that $\bD$ equals $\partial D\cap \partial B(0, \sqrt r)$ and is also the \v Silov boundary of $D$ (cf.~\cite[Theorem 6.5]{Loos}), that is, the smallest closed subset of $\partial D$ such that $\sup_D\abs{f}= \max_{\bD} \abs{f}$ for every continuous function $f$ on the closure $\Cl(D)$ of $D$ which is holomorphic on $D$. Observe that $\bD$ is a real algebraic set and a compact connected analytic submanifold of $\C^n$ on which $K$ acts transitively  (cf.~\cite[Theorem 6.5]{Loos}). We shall generally use $z$ to denote an element of $D$ and $\zeta$ to denote an element of $\bD$.
	
	We shall denote with $\beta$ the normalized $K$-invariant measure on $\bD$, so that $\beta=c\chi_{\bD}\cdot \Hc^m$, where $m$ is the dimension of $\bD$ and $c=1/\Hc^m(\bD)$.\footnote{If $\mi$ is a Radon measure and $f\in L^1_\loc(\mi)$, we denote with $f\cdot \mi$ the Radon measure such that $\mi(A)=\int_A f\,\dd\mi$ for every Borel set $A$.} 
	
	We shall denote by $\phi\colon D\to \Db$ a rational inner function, that is, a rational function such that $\phi_1$, the pointwise limit of the $\phi_\rho=\phi(\rho\,\cdot\,)$ for $\rho \to 1^-$ on $\bD$ (where it exists), takes unimodular values $\beta$-almost everywhere on $\bD$.
	
	We shall denote with $\mi_\alpha$ the Clark measure associated with $\phi$ and $\alpha\in \T$, that is, the unique positive Radon measure on $\bD$ such that
	\[
	(\Pc \mi_\alpha)(z)=\Re \frac{\alpha+\phi(z)}{\alpha-\phi(z)}= \frac{1-\abs{\phi(z)}^2}{\abs{\alpha-\phi(z)}^2}
	\]
	for every $z\in D$, where $\Pc \mi_\alpha$ denotes the Poisson--Szeg\H o integral of $\mi_\alpha$.
	
	Finally, we  denote with $H^2(\mi_\alpha)$ the closure in $L^2(\mi_\alpha)$ of the space of holomorphic polynomials on $\C^n$.
\end{nott}

Notice that, when $D$ is the polydisc $\Db^n$, then $r=m=n$, $K=\T^n$ acting by componentwise multiplication, and $\bD=\T^n$, while $\Pc$ is simply the operator whose kernel is the direct product of the classical Poisson kernels on each component (cf.~\eqref{eq:3}).  
We prefer to avoid recalling the definition of $\Pc$ in the general case since we shall not use any of its properties. In fact, for the purposes of this paper, we only need the information summarized in the following proposition (cf.~\cite[Theorem 3.1, Corollary 3.3,  Remark 4.2, Proposition 4.6 and 4.7]{BoundedClark}).
Cf.~\cite{BoundedClark} for more information on Clark measures on bounded symmetric domains; notice, though, that~\cite{BoundedClark} adds the assumption $r=1$ to simplify some expressions. Since, however, most objects are defined in a normalized way, the   dilations needed to consider the case $r\neq 1$ are essentially harmless.

\begin{prop}\label{prop:0}
	Take $\alpha\in \T$. Then, $\mi_\alpha$ is absolutely continuous with respect to $\Hc^{m-1}$. In addition, if for every $\xi\in \widehat \bD=\bD/\T$ (where $\T$ acts on $\bD$ by multiplication) we denote with $\mi_{\alpha,\xi}$ the unique positive Radon measure on $\pi^{-1}(\xi)$ such that
	\[
	\int_{\pi^{-1}(\xi)} \frac{r^2-\abs{z}^2}{\abs{\zeta-z}^2}\,\dd \mi_{\alpha,\xi}(\zeta)= \frac{1-\abs{\phi(z)}^2}{\abs{\alpha-\phi(z)}^2}
	\]
	for every $z\in \Db\pi^{-1}(\xi)$,\footnote{That is, $\mi_{\alpha,\xi}$ is the classical Clark measure associated with the restriction of $\phi$ to the disc $ \Db\pi^{-1}(\xi)$ and $\alpha$.}	where $\pi\colon \bD\to \widehat \bD$ denotes the canonical projection, then the mapping $\xi\mapsto \mi_{\alpha,\xi}$ is vaguely continuous and 
	\[
	\mi_\alpha= \int_{\widehat \bD} \mi_{\alpha,\xi}\,\dd \widehat \beta(\xi)
	\]
	where $\widehat\beta$ denotes the image measure  of $\beta$ under $\pi$.
	Finally, $\mi_\alpha$ is concentrated on $\phi_1^{-1}(\alpha)$.
\end{prop}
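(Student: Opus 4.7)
The plan is to reduce everything to one-variable Clark theory along the circle fibers of the projection $\pi\colon \bD \to \widehat{\bD}$. The key structural fact I would exploit is that, by circularity of $D$, for any $\zeta\in \bD$ the complex line $\C\zeta$ meets $D$ in the disc $\Db\pi^{-1}(\pi(\zeta))$ of radius $\sqrt r$, whose boundary circle is precisely the fiber $\pi^{-1}(\pi(\zeta))\subset \bD$. Moreover, for $z$ on this complex line the Poisson--Szeg\"o kernel $S(z,\cdot)$ reduces on the fiber to the classical rescaled Poisson kernel $(r^2-\abs{z}^2)/\abs{\zeta-z}^2$, while averaging to $z$-independent contributions on the remaining fibers. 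Consequently, for any positive Radon measure $\nu$ on $\bD$ with disintegration $(\nu_\xi)$ along $\pi$ and any $z\in \Db\pi^{-1}(\xi_0)$, the value $\Pc\nu(z)$ is determined by $\nu_{\xi_0}$ via the one-variable Poisson integral plus a $z$-independent additive term from the other fibers.

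Setting $\nu_\alpha := \int_{\widehat{\bD}} \mi_{\alpha,\xi}\,\dd\widehat\beta(\xi)$, the defining formula for each $\mi_{\alpha,\xi}$ then gives
\[
\Pc \nu_\alpha(z) = \frac{1-\abs{\phi(z)}^2}{\abs{\alpha-\phi(z)}^2} = \Pc \mi_\alpha(z)
\]
for every $z\in \bigcup_\xi \Db\pi^{-1}(\xi)$. Since both sides are pluriharmonic on $D$ and the union of these complex discs is a determining set, the identity extends to all of $D$, and uniqueness in the Herglotz--Bochner representation yields $\mi_\alpha = \nu_\alpha$. The vague continuity of $\xi\mapsto \mi_{\alpha,\xi}$ then follows from the continuity of $\xi\mapsto \phi|_{\Db\pi^{-1}(\xi)}$ in the compact-open topology together with the classical weak-$*$ continuity of the one-variable Herglotz transform. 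Concentration of $\mi_\alpha$ on $\phi_1^{-1}(\alpha)$ follows by combining the classical fact (each $\mi_{\alpha,\xi}$ is carried by $\{\phi_1 = \alpha\}\cap \pi^{-1}(\xi)$) with the disintegration formula.

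For the absolute continuity $\mi_\alpha \ll \Hc^{m-1}$, I would use that $\bD$ is a smooth principal $\T$-bundle over the $(m-1)$-dimensional compact manifold $\widehat{\bD}$, so $\Hc^m|_{\bD}$ factors locally as $\Hc^{m-1}|_{\widehat\bD}\otimes \Hc^1|_\T$; since $\phi$ is rational inner, $\phi|_{\Db\pi^{-1}(\xi)}$ is a finite Blaschke product and $\mi_{\alpha,\xi}$ is a finite sum of point masses on its circle fiber. Plugging this into the disintegration formula yields a measure concentrated on the $(m-1)$-dimensional real algebraic set $\phi_1^{-1}(\alpha)$ and absolutely continuous with respect to $\Hc^{m-1}$, the density arising from a fiber-wise summation of reciprocals of the radial derivative of $\phi$. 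The main obstacle I anticipate is the rigorous verification of the fibered form of $\Pc$ on a general bounded symmetric domain; this requires knowing the explicit structure of the Poisson--Szeg\"o kernel (or at least its $K$-equivariance and its restriction to a single complex line), and is most conveniently derived from the Jordan triple system machinery recalled in the following section.
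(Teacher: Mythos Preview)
The paper does not actually prove this proposition: it is quoted from~\cite{BoundedClark} (specifically Theorem 3.1, Corollary 3.3, Remark 4.2, Propositions 4.6 and 4.7 there), so there is no ``paper's proof'' to compare with. Your proposal is thus an attempt to supply an independent argument.

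Your reduction strategy is natural, but the central structural claim is false. You assert that for $z$ on the line $\C\zeta_0$ the Poisson--Szeg\H{o} kernel $S(z,\,\cdot\,)$ restricts on the fiber $\pi^{-1}(\pi(\zeta_0))$ to the classical kernel $(r^2-\abs{z}^2)/\abs{\zeta-z}^2$, and that its contribution on the remaining fibers averages to something independent of $z$. Already on the unit ball $B_n$ (rank $r=1$) this fails: for $z=w e_1$ and $\zeta=\ee^{i\theta}e_1$ one has
\[
S(z,\zeta)=\frac{(1-\abs{w}^2)^n}{\abs{1-\overline w\,\ee^{i\theta}}^{2n}},
\]
which is \emph{not} the one-variable Poisson kernel $(1-\abs{w}^2)/\abs{\ee^{i\theta}-w}^2$ when $n\Meg 2$. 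Likewise, for $\zeta$ on a different fiber $\T\zeta'$ the integral $\int_{\T}S(w e_1,\ee^{i\theta}\zeta')\,\dd\theta$ depends on $\abs{w}\abs{\langle\zeta',e_1\rangle}$ and is not constant in $w$. Hence your computation of $\Pc\nu_\alpha(z)$ on the slice does not go through, and with it the identification $\nu_\alpha=\mi_\alpha$ collapses.

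The underlying point is that the Poisson--Szeg\H{o} integral does \emph{not} fiber over $\widehat{\bD}$ in the way you describe; what makes the disintegration work in~\cite{BoundedClark} is rather the \emph{pluriharmonicity} of $\Re\frac{\alpha+\phi}{\alpha-\phi}$ (equivalently, the holomorphy of $\frac{\alpha+\phi}{\alpha-\phi}$), which allows one to pass through the Cauchy--Szeg\H{o} kernel and exploit its holomorphic reproducing property on slices, together with weak-$*$ arguments for the boundary measures. Your final paragraph correctly flags the kernel structure as the ``main obstacle,'' but the obstacle is not merely technical: the asserted fibered form is simply wrong, so the plan needs to be reworked rather than verified.
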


We may now provide a description of the Clark measures associated with rational inner functions on $D$.

\begin{teo}\label{teo:3}
	Let $p,q$ be two coprime polynomials such that $\phi=p/q$ on $D$ and take $\alpha\in \T$. Then, the following hold:
	\begin{enumerate}
		\item[\textnormal{(1)}] the algebraic set $N\coloneqq \Set{\zeta\in \bD\colon q(\zeta)=0}$   has (real) dimension $\meg m-2$;
		
		\item[\textnormal{(2)}] the closure $\Cl(\phi_1^{-1}(\alpha))$ of $\phi_1^{-1}(\alpha)$ is $V_\alpha\coloneqq\Set{\zeta\in \bD\colon p(\zeta)=\alpha q(\zeta)}$, and has (real) dimension $m-1$;
		
		\item[\textnormal{(3)}] $\phi_1$ is defined on the whole of $\bD$ and attains unimodular values thereon;
		
		\item[\textnormal{(4)}] setting $c\coloneqq \Hc^m(\bD)$,
		\begin{equation}\label{eq:1}
		\mi_\alpha= \frac{2\pi}{c\abs{\nabla \phi}} \chi_{V_\alpha\setminus N} \cdot \Hc^{m-1}= \frac{2\pi}{c\abs{\nabla \phi}} \chi_{\phi^{-1}_1(\alpha)} \cdot \Hc^{m-1}= \frac{2\pi}{c\abs{\nabla \phi}} \chi_{V_\alpha} \cdot \Hc^{m-1}.
		\end{equation}
	\end{enumerate}
\end{teo}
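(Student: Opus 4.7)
My strategy is to first establish the fundamental identity $|p|^2 \equiv |q|^2$ on all of $\bD$, from which the four assertions follow. Since $\phi$ is inner, the radial limit $\phi_1$ is unimodular $\beta$-almost everywhere on $\bD$; on the open set $\bD\setminus N$ one has $\phi_1=p/q$ continuously, so $|p|^2-|q|^2$ is a real-analytic function on the connected real-analytic manifold $\bD$ that vanishes on a subset of positive $\beta$-measure, and therefore identically. In particular $N\subseteq\{p=q=0\}\cap\bD$. For (1) I would argue by contradiction: if $\dim N=m-1$, then $N$ contains a smooth real-analytic submanifold $M$ of real codimension $1$ in $\bD$; since the \v Silov boundary of a bounded symmetric domain is a generic real-analytic CR submanifold of $\C^n$, the complexification of $M$ inside $\C^n$ is a complex hypersurface defined by some irreducible polynomial $h$, and $h$ must divide both $p$ and $q$ --- contradicting coprimality. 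For (3), the identity $|p|^2=|q|^2$ forces $p$ and $q$ to have matching orders of vanishing at each $\zeta_0\in N$ (the genericity of $\bD$ lets us identify orders of vanishing on $\bD$ with those in $\C^n$), so $\phi_1(\zeta_0)=\lim_{\rho\to 1^-}p(\rho\zeta_0)/q(\rho\zeta_0)$ exists and is unimodular by a limiting argument from $\bD\setminus N$. For (2), the complex equation $p=\alpha q$ on $\bD\setminus N$ reduces, via $|p|=|q|$, to the single real equation $\arg(p/q)=\arg\alpha$, giving $\dim(V_\alpha\setminus N)=m-1$; the closure relation $\Cl(\phi_1^{-1}(\alpha))=V_\alpha$ then follows from the density of $V_\alpha\setminus N$ in $V_\alpha$ (as $N$ has codimension at least $2$) together with the closedness of $V_\alpha$.

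For (4), I would apply Proposition~\ref{prop:0} to disintegrate $\mi_\alpha=\int_{\widehat\bD}\mi_{\alpha,\xi}\,\dd\widehat\beta(\xi)$. Each slice measure $\mi_{\alpha,\xi}$ is the classical one-variable Clark measure on the circle $\pi^{-1}(\xi)$ associated with the finite Blaschke product $B_\xi(w)=\phi(w\zeta_\xi)$ for any chosen base point $\zeta_\xi\in\pi^{-1}(\xi)$; the classical formula expresses it as a finite sum of point masses with mass proportional to $1/|B_\xi'(w_j)|=1/|E\phi(\zeta_j)|$ at each $\zeta_j\in\pi^{-1}(\xi)\cap V_\alpha$, where $E=\sum_j z_j\partial_j$ is the Euler operator (and $B_\xi'(w)=\zeta_\xi\cdot\nabla\phi(w\zeta_\xi)=w^{-1}E\phi(w\zeta_\xi)$). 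I would then combine these slice formulas using the coarea formula for $\pi|_{V_\alpha}\colon V_\alpha\to\widehat\bD$, together with the normalization $\widehat\beta=(2\pi\sqrt r/c)\cdot\Hc^{m-1}_{\widehat\bD}$ (read off from $\beta$ by coarea applied to the circle bundle $\pi$), and a computation of the Jacobian of $\pi|_{V_\alpha}$ exploiting the Julia--Carath\'eodory-type identity $E\phi/\phi\in\R_{\ge 0}$ on $\bD\setminus N$. Assembling these pieces should yield the stated density.

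The main obstacle is a single geometric identity, namely $|\nabla^{T\bD}\phi|=|\nabla\phi|$ on $\bD\setminus N$ --- the tangential gradient magnitude of $\phi$ along $\bD$ equals the magnitude of the full complex gradient $\nabla\phi$, whenever $|\phi|=1$ on $\bD$. This is exactly what converts the tangential derivative naturally arising in the coarea computation into the $|\nabla\phi|$ appearing in the statement. It can be verified by direct computation for polydiscs (where both sides reduce to $\sum_j|\partial_j\phi|^2$) and for balls (where $|\phi|=1$ on $\bD$ forces $\nabla\phi$ to be Hermitian-parallel to $\bar\zeta$, making $|E\phi|=|\nabla\phi|$), but in the full generality of bounded symmetric domains it must be established using the CR geometry of $\bD$ and the rigidity imposed by unimodularity --- heuristically, reflecting that $\nabla\phi$ is forced into a one-complex-dimensional "direction" depending on $\zeta$ at each unimodular boundary point.
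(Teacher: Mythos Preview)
Your outline for (4) is essentially the paper's approach---disintegrate $\mi_\alpha$ over $\widehat\bD$, apply the one-variable Blaschke-product Clark formula on each fibre, and reassemble via the coarea formula for $\pi|_{\phi_1^{-1}(\alpha)}$---and you have correctly isolated the one delicate step (the tangential-versus-full gradient identity), which the paper passes over in a single line when it writes $T_\zeta\phi_1^{-1}(\alpha)=\nabla\phi(\zeta)^\perp\cap T_\zeta\bD$ and reads off the coarea factor as $|\langle\nabla\phi(\zeta)\vert\zeta\rangle|/(|\zeta|\,|\nabla\phi(\zeta)|)$.

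Your arguments for (1)--(3), however, have genuine gaps. For (1), the step ``the complexification of $M$ is a complex hypersurface defined by some irreducible polynomial $h$'' is unjustified: a real-analytic $(m-1)$-submanifold of $\bD$ need not lie on any complex \emph{algebraic} hypersurface, so you cannot manufacture a common polynomial factor of $p$ and $q$ this way. The paper instead proves directly (Lemma~\ref{lem:14}) that $m-\dim_\R(V\cap\bD)\Meg n-\dim_\C V$ for every complex algebraic $V$, using only that $T_\zeta\bD$ spans $\C^n$ over $\C$; taking $V=\{p=q=0\}$, coprimality gives $\dim_\C V\meg n-2$ and hence (1). For (2), density of $V_\alpha\setminus N$ in $V_\alpha$ does \emph{not} follow from $\dim N\meg m-2$: real algebraic sets are not equidimensional in general, and you have not ruled out low-dimensional components of $V_\alpha$ trapped inside $N$. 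The paper handles this (after reducing to tube type via $\phi=\psi\circ\pr$, cf.~\cite{KoranyiVagi}) by Cayley-transforming to a tube domain $T_\Omega$, extending $\psi$ across $\R^n$ by Schwarz reflection, and invoking the argument of~\cite[Theorem~2.6]{BPS} to exclude any point of $N\setminus\Cl(\phi_1^{-1}(\alpha))$. For (3), equal orders of vanishing of $p$ and $q$ at $\zeta_0\in N$ neither guarantee that $\lim_{\rho\to 1^-}p(\rho\zeta_0)/q(\rho\zeta_0)$ exists nor that it is unimodular, and the ``limiting argument from $\bD\setminus N$'' presupposes continuity of $\phi_1$ at $\zeta_0$, which is what you are trying to prove. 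The paper instead observes that $w\mapsto\phi(w\zeta)$ is a bounded rational function on $\Db$, hence a finite Blaschke product for $\widehat\beta$-a.e.\ $\xi=\pi(\zeta)$, and then upgrades this to \emph{every} $\xi$ using the vague continuity of $\xi\mapsto\mi_{\alpha,\xi}$ (Proposition~\ref{prop:0}) together with the uniform bound $\card(V_\alpha\cap\pi^{-1}(\xi))\meg\max(\deg p,\deg q)$; this yields existence and unimodularity of $\phi_1$ on all of $\bD$ at once.
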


Observe  that if $\zeta\in \bD \setminus N$, then $\nabla \phi(\zeta)$ is defined (and equal to $\nabla p(\zeta)/q(\zeta)-p(\zeta)\nabla q(\zeta)/q(\zeta)^2$). Hence, the expressions for $\mi_\alpha$ provided in~\eqref{eq:1} make sense.  

This result extends~\cite[Proposition 3.9]{BCS} and~\cite[Theorems 3.3, 3.5, and 3.8]{ABBCS} to the case of general rational inner functions and general bounded symmetric domains. The proof of~(4) is different (and somewhat simpler), even though the statement is a little less precise, since $\phi^{-1}(\alpha)$ is not described as a finite union of analytic curves (in the case of the bidisc) or analytic manifolds (in the case of the polydisc, when possible). Nonetheless, such representations essentially follow (even though in a more abstract way) from the fact that $\phi^{-1}_1(\alpha)$ differs from the algebraic set $V_\alpha$ by a subset of $N$, which is an algebraic set of   dimension $\meg m-2$.

Before we pass to the proof, we need a few lemmas.

\begin{lem}\label{lem:14}
	Let $V$ be a complex algebraic set in $\C^n$.  Then $m-\dim_\R (V\cap \bD)\Meg n-\dim_\C V$.
\end{lem}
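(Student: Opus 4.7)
The plan is to exploit that the \v Silov boundary $\bD$ is a \emph{generic} real submanifold of $\C^n$, in the sense that $T_\zeta\bD + iT_\zeta\bD = \C^n$ for every $\zeta \in \bD$. Applying the identity $\dim_\R(A + B) + \dim_\R(A \cap B) = \dim_\R A + \dim_\R B$ with $A = T_\zeta\bD$ and $B = iT_\zeta\bD$, genericity forces the CR dimension $c := \dim_\C(T_\zeta\bD \cap iT_\zeta\bD)$ to equal $m - n$. I would set $d := \dim_\C V$ and $k := n - d$, reduce to $V$ irreducible of pure complex dimension $d$ by decomposing into irreducible components, and argue by induction on $d$. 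Since $V_{\mathrm{sing}}$ is a complex algebraic set of complex dimension at most $d - 1$, the inductive hypothesis gives $\dim_\R(V_{\mathrm{sing}} \cap \bD) \leq m - n + (d - 1) < m - k$, so only the smooth locus $V_{\mathrm{reg}} \cap \bD$ needs to be controlled.

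At each $\zeta_0 \in V_{\mathrm{reg}} \cap \bD$ the tangent cone to $V \cap \bD$ at $\zeta_0$ is contained in $U := T_{\zeta_0}V \cap T_{\zeta_0}\bD$, and so $\dim_\R U$ bounds the local real dimension of $V \cap \bD$ at $\zeta_0$. Since $U \subseteq T_{\zeta_0}V$ and the latter is a complex subspace of dimension $d$, the complex subspace $U + iU$ of $T_{\zeta_0}V$ satisfies $\dim_\R(U + iU) \leq 2d$. Similarly, $U \cap iU \subseteq T_{\zeta_0}\bD \cap iT_{\zeta_0}\bD$ lies in a complex subspace of dimension $c = m - n$, so $\dim_\R(U \cap iU) \leq 2(m - n)$. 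Applying once more $\dim_\R(U + iU) + \dim_\R(U \cap iU) = 2\dim_\R U$ then yields
\[
\dim_\R U \leq d + (m - n) = m - k,
\]
which is exactly the required bound on $V_{\mathrm{reg}} \cap \bD$.

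The main obstacle will be to justify the genericity of $\bD$. Exploiting that $K$ acts transitively and linearly on $\bD$, one has $T_{\zeta_0}\bD = \mathfrak{k}\cdot\zeta_0$ with $\mathfrak{k} = \mathrm{Lie}(K)$, and hence $T_{\zeta_0}\bD + iT_{\zeta_0}\bD = \mathfrak{k}_\C \cdot \zeta_0$, which is the tangent at $\zeta_0$ to the orbit $K_\C \cdot \zeta_0$ of the complexified group. If this subspace were proper, the orbit $K_\C \cdot \zeta_0$ would have complex dimension strictly less than $n$, and its Zariski closure would be a proper complex algebraic subvariety of $\C^n$ containing $\bD$. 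But no nontrivial polynomial $p$ can vanish on $\bD$: by the defining property of the \v Silov boundary, $\sup_D\abs{p} = \max_\bD \abs{p} = 0$ would force $p \equiv 0$ on $D$, hence on $\C^n$. This contradiction establishes the genericity of $\bD$ and completes the plan.
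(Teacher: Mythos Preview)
Your argument is correct and follows the same skeleton as the paper's proof: reduce to the smooth part of $V$, and at each point bound $\dim_\R\bigl(T_\zeta V\cap T_\zeta\bD\bigr)$ using the genericity of $\bD$ together with elementary linear algebra. The linear-algebra step you write via $\dim_\R(U+iU)+\dim_\R(U\cap iU)=2\dim_\R U$ is exactly the paper's footnote computation in different clothing, and your induction on $\dim_\C V$ plays the same role as the paper's stratification of $V$ into smooth Zariski-locally-closed pieces followed by a semi-algebraic cell decomposition of each $V_j\cap\bD$.

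The one genuine difference is in how genericity of $\bD$ is established. The paper simply invokes the Siegel-domain realization of $D$ (\cite[Theorem 10.8]{Loos}), where $\bD$ corresponds to $\R^n$ inside $\C^n$ and genericity is obvious. Your argument via the complexified orbit $K_\C\cdot\zeta_0$ and the \v Silov property is self-contained and avoids that detour: if $\mathfrak{k}_\C\cdot\zeta_0\subsetneq\C^n$, the Zariski closure of $K_\C\cdot\zeta_0$ would be a proper hypersurface containing $\bD$, contradicting $\sup_D\abs{p}=\max_{\bD}\abs{p}$. This is a nice alternative, trading an external structural fact for an intrinsic one already built into the setup.
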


\begin{proof}
	Observe that $V$ is the union of a complex manifold which is locally closed in the Zariski topology and of dimension $\dim_\C V$ and of a complex algebraic set of strictly lower dimension (the singular set of $V$). 
	Iterating this decomposition, we see that $V$ is a finite union of pairwise disjoint  complex manifolds $V_j$ which are locally closed in the Zariski topology and have dimension $\meg \dim_\C V$. 
	Now, each $V_j\cap \bD$ is a (real) semi-algebraic set, hence may be decomposed as a finite union of pairwise disjoint (real) analytic manifolds $M_{j,k}$ of dimension $\meg \dim_\R( V_j\cap \bD)$ (cf., e.g.,~\cite[Theorem 1.3]{Coste}). 
	Now, if $\zeta\in M_{j,k}$, then $T_\zeta M_{j,k}\subseteq T_\zeta V_j \cap T_\zeta\bD$, where $T_\zeta$ is used to denote tangent spaces at $\zeta$. 
	Since $T_\zeta \bD$ generates $\C^n$ over $\C$ (as is most easily seen in the Siegel domain realization of $D$, cf.~\cite[Theorem 10.8]{Loos}) and since and $T_\zeta V_j$ is a complex vector subspace of $\C^n$, we see that $m-\dim_\R( T_\zeta V_j \cap T_\zeta\bD) \Meg n-\dim_\C T_\zeta V_j$,\footnote{We may find a basis $v_1,\dots, v_q$ of $T_\zeta V_j \cap T_\zeta\bD\cap (i T_\zeta \bD)$ over $\C$, and complete it to a basis $v_1,\dots, v_q,i v_1,\dots, i v_q, v'_1,\dots, v'_p$ of $T_\zeta V_j \cap T_\zeta\bD$ over $\R$. We may also complete $v_1,\dots, v_q,  v'_1,\dots, v'_p$ to a basis $v_1,\dots, v_q, v'_1,\dots, v'_p,v''_1,\dots, v''_s$ of $\C^n$ over $\C$, with $v''_1,\dots v''_s\in T_\zeta \bD$, so that $m\Meg 2 q+p+s$, $\dim_\R M_{j,k}=2 q+p$, $n=q+p+s$, $\dim_{\C} V_j\Meg q+p$, whence $m-\dim_{\R}M_{j,k}\Meg s\Meg n-\dim_\C V_j$
	} whence $m-\dim_\R M_{j,k}\Meg n-\dim_\C V_j$. The assertion follows, since $\dim_\R (V\cap \bD)=\max_{j,k} \dim_\R M_{j,k}$.
\end{proof}

\begin{lem}\label{lem:15}
	Assume that $D$ is of tube type. Then, \textnormal{(1)} and \textnormal{(2)} of Theorem~\ref{teo:3} hold.
\end{lem}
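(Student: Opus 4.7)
The plan is to establish both assertions by reducing them to dimensional statements about complex algebraic sets intersected with $\bD$ via Lemma~\ref{lem:14}, together with the inner condition on $\phi$.

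\emph{Proof of (1).} The key reduction is to show $N\subseteq \{p=q=0\}\cap\bD$. Since $\phi$ is inner, the polynomial $|p|^2-|q|^2=p\bar p-q\bar q$ (in $\zeta,\bar\zeta$) restricts to a real-analytic function on the connected real-analytic manifold $\bD$ which vanishes $\beta$-almost everywhere, hence identically. Thus $|p|=|q|$ on all of $\bD$, so $p(\zeta)=0$ for every $\zeta\in N$. Since $p,q$ are coprime, the complex algebraic set $\{p=q=0\}$ has complex dimension at most $n-2$, and Lemma~\ref{lem:14} then yields $\dim_\R N\leq m-2$.

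\emph{Proof of (2).} The upper bound $\dim_\R V_\alpha\leq m-1$ follows at once from Lemma~\ref{lem:14} applied to the complex hypersurface $\{p-\alpha q=0\}$ (which has complex dimension $n-1$; one may assume $\phi\not\equiv\alpha$, as otherwise the Clark measure is degenerate). For the lower bound, I would invoke Proposition~\ref{prop:0}: the measure $\mi_\alpha$ is absolutely continuous with respect to $\Hc^{m-1}$ and concentrated on $\phi_1^{-1}(\alpha)$. Since $\phi_1=p/q$ on $\bD\setminus N$ (by continuity, as $q$ is non-vanishing there), we have $\phi_1^{-1}(\alpha)\cap(\bD\setminus N)=V_\alpha\setminus N$. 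The total mass $\mi_\alpha(\bD)$ is computed from the Poisson--Szeg\H o representation $(\Pc\mi_\alpha)(z)=(1-|\phi(z)|^2)/|\alpha-\phi(z)|^2$ at any $z\in D$ with $\phi(z)\neq\alpha$ (which exists, as $\phi$ is nonconstant), and is therefore strictly positive. Absolute continuity then forces $\Hc^{m-1}(V_\alpha\setminus N)>0$, and in turn $\dim_\R V_\alpha\geq m-1$.

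The equality $\Cl(\phi_1^{-1}(\alpha))=V_\alpha$ is where I expect the main obstacle to lie. The inclusion $\subseteq$ is immediate, since $V_\alpha$ is closed and contains $\phi_1^{-1}(\alpha)=V_\alpha\setminus N$. For the reverse, observing that $N\subseteq V_\alpha$ (as $p=q=0$ on $N$), one needs every $\zeta_0\in N$ to be a limit of points of $V_\alpha\setminus N$, and the measure-theoretic argument above only gives this on average. The tube-type hypothesis enters precisely here: on $\bD\setminus N$, the identity $|p|^2=|q|^2$ from~(1) gives $|p-\alpha q|^2=2|q|^2(1-\Re(\bar\alpha\phi))$, so $V_\alpha\setminus N$ is cut out by the \emph{single} real-analytic equation $\Re(\bar\alpha\phi)=1$ rather than by the two independent equations $\Re(p-\alpha q)=0$ and $\Im(p-\alpha q)=0$. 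A semi-algebraic decomposition of $V_\alpha$ and $N$ near each $\zeta_0\in N$, in the spirit of the proof of Lemma~\ref{lem:14}, should upgrade the global dimension bound to a local one, showing that $V_\alpha$ has pure real dimension $m-1$ at every point and hence that $V_\alpha\setminus N$ is dense in $V_\alpha$.
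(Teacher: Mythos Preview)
Your arguments for (1) and for the dimension estimate in (2) are correct and essentially match the paper's, with one cosmetic difference: the paper deduces $p(\zeta)=0$ for $\zeta\in N$ directly from the boundedness of $\phi=p/q$ on $D$ (if $q(\zeta)=0$ but $p(\zeta)\neq 0$, then $\abs{\phi}$ blows up along any approach to $\zeta$ from $D$), whereas you pass through the identity $\abs{p}=\abs{q}$ on $\bD$. Both are valid.

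The gap is exactly where you locate it: the equality $\Cl(\phi_1^{-1}(\alpha))=V_\alpha$. Your proposed route---show that $V_\alpha$ has pure real dimension $m-1$, so that removing $N$ (of dimension $\meg m-2$) leaves a dense subset---does not go through as sketched. The observation that $V_\alpha\setminus N$ is cut out by the single equation $\Re(\bar\alpha\phi)=1$ tells you nothing about the local structure of $V_\alpha$ \emph{at} points of $N$, which is precisely what is at stake. In fact, for an arbitrary complex hypersurface $\{f=0\}$ the intersection with $\bD$ need \emph{not} have pure dimension $m-1$ (e.g.\ $\{z_1+z_2=2\}\cap\T^2=\{(1,1)\}$), so any argument must exploit the inner condition more substantively than through the identity $\abs{p}=\abs{q}$ alone; a semi-algebraic stratification ``in the spirit of Lemma~\ref{lem:14}'' only yields upper bounds on dimension and cannot supply the needed lower bound at points of $N$.

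The paper's argument is genuinely different and uses the tube-type hypothesis in an essential analytic way. Assuming there exists $\zeta\in N\setminus\Cl(\phi_1^{-1}(\alpha))$, one applies a Cayley transform $\Cc_{-\zeta}$ sending $D$ onto a tube domain $T_\Omega=\R^n+i\Omega$ with $\zeta\mapsto 0$, and composes with the Cayley transform $\Cc_\alpha\colon w\mapsto i(\alpha+w)/(\alpha-w)$ on $\Db$. The resulting rational function $\psi=\Cc_\alpha\circ\phi\circ\Cc_{-\zeta}^{-1}$ maps $T_\Omega$ into $\C_+$ and is real-valued on a full neighbourhood of $0$ in $\R^n$ (this is where the hypothesis that $\zeta$ is \emph{not} in the closure of $\phi_1^{-1}(\alpha)$ enters). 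Schwarz reflection extends $\psi$ holomorphically across $\R^n$ near $0$; after a linear change of variables embedding a product of half-planes $\C_+^n$ into $T_\Omega$, one invokes the argument of~\cite[Theorem~2.6]{BPS} for rational Pick functions to reach a contradiction. (For $D=\Db^n$ the paper cites~\cite[Theorem~2.4]{ABBCS} directly.) The moral is that the Pick/inner structure---positivity of the imaginary part---is what forces the density, not algebraic dimension-counting.
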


\begin{proof}
	Observe that $n=m$, since $D$ is of tube type. In addition, since $\phi$ is bounded on $D$, if $\zeta\in \bD$ and $q(\zeta)=0$, then $p(\zeta)=0$, so that $N$ is also the algebraic set defined by $p$ and $q$ on $\bD$. 
	Let $V$ be the (complex) algebraic subset of $\C^n$ defined by $p$ and $q$, and observe that $V$ has (complex) codimension $\Meg 2$ in $\C^n$ since $p$ and $q$ are coprime. 
	Consequently, Lemma~\ref{lem:14} shows that also the (real) algebraic set $N= \bD \cap V$ has (real) codimension $\Meg 2$ in $\bD$. 
	This proves (1). In a similar way, one proves that $V_\alpha$ has dimension $\meg n-1$. Since   $\mi_\alpha$ is non-zero, concentrated on $\phi_1^{-1}(\alpha)$, and absolutely continuous with respect to $\Hc^{n-1}$ by   Proposition~\ref{prop:0}, it is then clear that $\Hc^{n-1}(\phi_1^{-1}(\alpha))>0$. 
	Observe also that $\phi_1^{-1}(\alpha) \subseteq V_\alpha=\phi_1^{-1}(\alpha)\cup N$: in fact, if $\zeta\in \bD$ and $q(\zeta)\neq 0$, then $\phi_1(\zeta)=p(\zeta)/q(\zeta)$, so that $\zeta\in \phi_1^{-1}(\alpha)$ if and only if $\zeta\in V_\alpha$, and clearly $N\subseteq V_\alpha$. 
	Then, $V_\alpha$ has dimension $n-1$. 
	
	In order to show that $V_\alpha=\Cl(\phi_1^{-1}(\alpha))$, we proceed by contradiction.\footnote{The reader only interested in the case of polydiscs may skip the rest of the proof, since this result was proved in~\cite[Theorem 2.4]{ABBCS} in this case.} Take $\zeta\in V_\alpha \setminus \Cl(\phi_1^{-1}(\alpha))=N\setminus \Cl(\phi_1^{-1}(\alpha))$, and let $\Cc_{-\zeta}$ be the Cayley transform which maps $D$ onto a tube domain $T_\Omega=\R^n+i \Omega$ for some open convex cone $\Omega\subseteq \R^n$, maps $\bD\cap \mathrm{dom} \,\Cc_{-\zeta}$ onto $\R^n$, and $\zeta$ to $0$. With the notation of Section~\ref{sec:3}, and assuming (up to a rotation) that $\overline z=\{-\zeta,z,-\zeta\}$ for every $z\in \C^n$, one has $\Cc_{-\zeta}(z)= -i(z+\zeta)^{-1}(z-\zeta)$, where $\C^n$ is endowed with the Jordan algebra product $(z,z')\mapsto \{z,-\zeta,z'\}$, and $\Omega=\Set{x^2\colon x\in \R^n, \det(x)\neq 0}$ (cf.~\cite[Theorem 10.8]{Loos}).
	In addition, let $\Cc_\alpha$ be the Cayley transform which maps $\Db$ onto $\C_+$ and $\T\setminus \Set{\alpha}$ onto $\R$ (namely, $\Cc_\alpha(w)=i \frac{\alpha+w}{\alpha-w}$). 
	Then, $\psi\coloneqq\Cc_\alpha\circ \phi\circ \Cc^{-1}_{-\zeta}$ is a rational function on $T_\Omega$ and takes real values on $\R^n\setminus \Cc_{-\zeta}(\phi_1^{-1}(\alpha)\cup N)$. Therefore, $\psi$ is also holomorphic in $-T_\Omega$ by reflection. 
	Now, let $(e_1,\dots, e_n)$ be a basis of $\R^n$ with elements in $\Omega$, and let $L$ be the $\C$-linear automorphism of $\C^n$  which maps $e_j$ to the $j$-th element of the canonical basis of $\C^n$. 
	Then, $L^{-1}(\C_+^n)\subseteq T_\Omega$. We may then apply the arguments in the proof of~\cite[Theorem 2.6]{BPS} to $\psi\circ L^{-1}$, reaching a contradiction. Thus, (2) follows.
\end{proof}

\begin{proof}[Proof of Theorem~\ref{teo:3}.]
	Observe that, if $D'$ and $D''$ denote the products of the irreducible factors of $D$ which are and are not of tube type, respectively, and if $\pr\colon D\to D'$ denotes the  linear projection  with kernel $D''$, then $\phi=\psi\circ \pr$ for some rational inner function $\psi$ on $ D'$, thanks to~\cite[Theorem 3.3]{KoranyiVagi}. Therefore, (1) and (2) follow from Lemma~\ref{lem:14}.
	
	Now, take $\xi\in \widehat \bD$ and observe that the restriction of $\phi$ to $\Db_\xi$ is a rational function. 
	In addition, with the notation of Proposition~\ref{prop:0}, there is a $\widehat \beta$-negligible subset $N'$ of $\widehat \bD$ such that the restriction of $\phi$ to $\Db_\xi\coloneqq \Db \pi^{-1}(\xi)$ is inner as well, hence a finite Blaschke product, for every $\xi\in \widehat \bD\setminus N'$. Consequently, by~\cite[Example 9.2.4 (2)]{CimaMathesonRoss},
	\[
	\mi_{\alpha,\xi}= \sum_{ \zeta\in \phi^{-1}_1(\alpha)\cap \pi^{-1}(\xi) } \frac{1}{\abs{(\partial_\zeta \phi)_1(\zeta)}} \delta_\zeta
	\]
	for every $\xi\in \widehat \bD\setminus N'$, where $(\partial_\zeta \phi)_1(\zeta)$ denotes the pointwise limit of $(\partial_\zeta \phi)(\rho\zeta)$ for $\rho \to 1^-$. 
	Observe that both $\phi_1(\zeta)$ and $(\partial_\zeta \phi)_1(\zeta)$ exist for every $\zeta\in \pi^{-1}(\xi)$, since they are  the radial limits at $1$ of the finite Blaschke product $\Db\ni w \mapsto \phi(w \zeta)\in \Db$ and of its derivative, respectively. 
	In addition, observe that 
	\[
	\card( \phi^{-1}_1(\alpha)\cap \pi^{-1}(\xi))\meg \card( V_\alpha\cap \pi^{-1}(\xi) )\meg \max(\deg p, \deg q).
	\]
	Since the mapping $\xi \mapsto \mi_{\alpha,\xi}$ is vaguely continuous on $\widehat \bD$ by Proposition~\ref{prop:0}, it then follows that $\mi_{\alpha,\xi}$ consists of the sum of at most $ \max(\deg p, \deg q) $ point masses for \emph{every} $\xi\in \widehat \bD$. In other words, the restriction of $\phi$ to $\Db_\xi$ is a rational \emph{inner} function for every $\xi\in \widehat \bD$, so that we may take $N'=\emptyset$.  The previous remarks then show that $\phi_1$ is defined on the whole of $\bD$ and has unimodular values thereon, that is, (3) holds. In addition, $(\partial_\zeta \phi)(\zeta)\neq 0$ for every $\zeta\in V_\alpha \setminus N$.
	Therefore,
	\begin{equation}\label{eq:6}
		\mi_\alpha= \int_{\widehat \bD} \mi_{\alpha,\xi}\,\dd \widehat \beta(\xi)= \int_{\widehat \bD} \sum_{ \zeta\in \phi^{-1}_1(\alpha)\cap \pi^{-1}(\xi) } \frac{1}{\abs{(\partial_\zeta \phi)_1(\zeta)}} \delta_\zeta\,\dd \widehat \beta(\xi)
	\end{equation}
	by Proposition~\ref{prop:0}. 
	Now, endow the (real) analytic manifold $\bD$ with the Riemannian metric induced by the scalar product of $\C^n$; by the area formula, the corresponding Riemannian volume is $\chi_{\bD}\cdot \Hc^m=c\beta$ (recall that $c=\Hc^m(\bD)$).  
	Since $\T$ acts by isometries on $\bD$, we may endow $\widehat \bD$ with the quotient metric induced by the submersion $\pi\colon \bD\to \widehat \bD$, so that the corresponding Riemannian volume is  $c'\widehat \beta$ for some $c'>0$ by $K$-invariance.\footnote{Observe that the action of $\T$ commutes with that of $K$, so that $K$ acts transitively on $\widehat \bD$ as well.}. Then,  the disintegration (cf.~\cite[Remark 2.5]{BoundedClark})
	\[
	\beta=\int_{\widehat \bD} \beta_\xi \,\dd \widehat \beta(\xi),
	\]
	where $\beta_\xi=\frac{1}{2\pi\sqrt r}\chi_{\pi^{-1}(\xi)}\cdot \Hc^1$, compared with the smooth coarea formula associated with $\pi\colon \bD\to \widehat \bD$, shows that $c=2 \pi \sqrt r c'$.
	If we now apply the coarea formula to $ \phi^{-1}_1(\alpha)$,\footnote{Notice that $\phi^{-1}_1(\alpha)$ is $\Hc^{m-1}$-measurable and rectifiable, since $\phi^{-1}_1(\alpha)\cup N$ is the algebraic set $\Set{ \zeta\in \bD\colon p(\zeta)=\alpha q(\zeta)}$ and $\Hc^{m-1}(N)=0$.} we obtain
	\begin{equation}\label{eq:7}
		f \chi_{\phi_1^{-1}(\alpha)}\cdot \Hc^{m-1}= c'\int_{\widehat \bD} \sum_{ \zeta\in \phi^{-1}_1(\alpha)\cap \pi^{-1}(\xi) }  \delta_\zeta\,\dd \widehat \beta(\xi),
	\end{equation}
	where $f(\zeta)$ is the coarea factor at $\zeta$ of the restriction of $\pi$ to $\phi_1^{-1}(\alpha)$. Observe that, if $\zeta\in \phi_1^{-1}(\alpha)\setminus N$, then $\phi_1^{-1}(\alpha)$ is a (real) analytic manifold in a neighbourhood of $\zeta$ by the implicit function theorem, since $\nabla \phi(\zeta)$ is defined and non-zero -- in fact, $(\partial_\zeta \phi)(\zeta)\neq 0$ by the previous remarks. 
	In particular,  $T_\zeta \phi^{-1}_1(\alpha)=\nabla \phi(\zeta)^\perp\cap T_\zeta \bD$, where $T_\zeta$ is used to denote tangent spaces at $\zeta$. 
	Using the fact that the  differential of $\pi$ on $T_\zeta \bD$ is a surjective partial isometry whose kernel is the line generated by $\zeta$ (and that $\abs{\zeta}=\sqrt r$), we then see that $f(\zeta)= \frac{\abs{\langle \nabla\phi(\zeta)\vert \zeta \rangle}}{\abs{\zeta}\abs{\nabla \phi(\zeta)}} = \frac{\abs{\partial_\zeta \phi(\zeta)}}{\sqrt r\abs{\nabla \phi(\zeta)}}$. Comparing~\eqref{eq:6} and~\eqref{eq:7}, we find
	\[
	\mi_\alpha= \frac{2\pi}{c\abs{\nabla \phi}} \chi_{\phi^{-1}_1(\alpha)} \cdot \Hc^{m-1},
	\]
	whence (4).
\end{proof}

We now pass to the problem of characterizing when $H^2(\mi_\alpha)=L^2(\mi_\alpha)$, when $D$ is a polydisc.

\begin{teo}\label{teo:4}
	Assume that $D=\Db^n$ and take $\alpha\in \T$. Then, $H^2(\mi_\alpha)=L^2(\mi_\alpha)$ if and only if   $\Cl(\phi_1^{-1}(\alpha))$ does not contain any set of the form $\pr_j^{-1}(V)$, where $j\in \Set{1,\dots, n}$, $V$ is a (real) algebraic hypersurface in $\T^{n-1}$, and $\pr_j\colon \T^n\to \T^{n-1}$ is the   projection $(\zeta_1,\dots, \zeta_n)\mapsto (\zeta_1,\dots, \zeta_{j-1}, \zeta_{j+1},\dots, \zeta_n)$.
\end{teo}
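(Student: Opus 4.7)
The plan is to reduce the statement to showing $\bar\zeta_j\in H^2(\mi_\alpha)$ for every $j=1,\ldots,n$, translate the geometric condition of the theorem into an algebraic condition on the projections of $V_\alpha$, and then establish the two implications separately by a fiberwise Szeg\H o analysis along $\pr_j$.

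\emph{Reduction and reformulation.} Since $\abs{\zeta_i}=1$ on $\T^n$, multiplication by $\zeta_i$ is unitary on $L^2(\mi_\alpha)$ and sends polynomials to polynomials; hence, if each $\bar\zeta_j\in H^2(\mi_\alpha)$, a simple bounded-times-limit induction gives $\bar\zeta^m\in H^2(\mi_\alpha)$ for every $m\in\N^n$, and $H^2(\mi_\alpha)=L^2(\mi_\alpha)$ follows from density of trigonometric polynomials in $L^2(\mi_\alpha)$. Next, write $p(\zeta)-\alpha q(\zeta)=\sum_{k=0}^{d_j}a_k^{(j)}(\hat\zeta_j)\zeta_j^k$ with $\hat\zeta_j=(\zeta_1,\ldots,\zeta_{j-1},\zeta_{j+1},\ldots,\zeta_n)\in\T^{n-1}$, and set $W_j\coloneqq\bigcap_k\{a_k^{(j)}=0\}\subseteq\T^{n-1}$. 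One checks that $\pr_j^{-1}(V)\subseteq V_\alpha=\Cl(\phi_1^{-1}(\alpha))$ for some real-algebraic hypersurface $V\subseteq\T^{n-1}$ iff $V\subseteq W_j$, which (since $W_j$ is itself algebraic) is equivalent to $\Hc^{n-2}(W_j)>0$. Thus the theorem's condition becomes: $\Hc^{n-2}(W_j)=0$ for every $j$. By Theorem~\ref{teo:3}, on the cylindrical set $\pr_j^{-1}(W_j)$ the measure $\mi_\alpha$ is (up to an $\Hc^{n-1}$-null set) the product of $\Hc^{n-2}|_{W_j}$ and a fiberwise density $w\coloneqq 2\pi/(c\abs{\nabla\phi})$ on $\T$ with $\log w\in L^1(\T)$ along each fiber.

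\emph{Necessity.} Assume $\Hc^{n-2}(W_j)>0$ for some $j$. For each $\hat\zeta\in W_j$ let $D(\,\cdot\,,\hat\zeta)$ be the Szeg\H o outer function on $\Db$ with $\abs{D}^2=w$, and set
\[
G(\zeta)\coloneqq\chi_{\pr_j^{-1}(W_j)}(\zeta)\,\frac{D(0,\hat\zeta_j)\,\bar\zeta_j}{D(\zeta_j,\hat\zeta_j)}.
\]
A short fiberwise computation using $\abs{D}^2=w$ and the observation that $\zeta_j\mapsto \zeta_j P(\zeta_j,\hat\zeta_j)D(\zeta_j,\hat\zeta_j)$ belongs to $zH^2(\T)$ for every holomorphic polynomial $P$ gives $\int G\bar P\,\dd\mi_\alpha=0$ for every such $P$, while $\norm{G}_{L^2(\mi_\alpha)}^2=\int_{W_j}\abs{D(0,\hat\zeta)}^2\,\dd\sigma(\hat\zeta)>0$. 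Hence $H^2(\mi_\alpha)\neq L^2(\mi_\alpha)$.

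\emph{Sufficiency.} Assume $\Hc^{n-2}(W_j)=0$ for every $j$ and fix $j$. Then $(\pr_j)_*\mi_\alpha$ is absolutely continuous with respect to $\Hc^{n-1}$ on $\T^{n-1}$; in particular, $\{a_0^{(j)}=0\}\subseteq\T^{n-1}$ is $(\pr_j)_*\mi_\alpha$-null, so the identity
\[
\bar\zeta_j=-\frac{1}{a_0^{(j)}(\hat\zeta_j)}\sum_{k=1}^{d_j}a_k^{(j)}(\hat\zeta_j)\zeta_j^{k-1}
\]
holds $\mi_\alpha$-a.e.\ on $V_\alpha$, and the core problem is to approximate the factor $1/a_0^{(j)}(\hat\zeta_j)$ by polynomials in $\hat\zeta_j$ in $L^2(\mi_\alpha)$. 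I would combine this identity with the companion one obtained from the rational-inner identity $p=\omega\tilde q$ (with $\omega\in\T$ and $\tilde q(\zeta)=\zeta^d q^\sharp(\zeta^{-1})$ the reciprocal polynomial), which expresses $\bar\zeta_j^{d_j}$ as a polynomial in $(\zeta_j,\hat\zeta_j)$ divided by the top coefficient $a_{d_j}^{(j)}$. Cross-multiplying these relations on $V_\alpha$ eliminates rational denominators and yields polynomial identities, which, combined with an induction on $n$ (slicing by fixing one of the remaining coordinates reduces to an $(n-1)$-variable polydisc problem and the hypothesis $\Hc^{n-2}(W_j)=0$ passes to its lower-dimensional analogue), should give the required $L^2(\mi_\alpha)$-approximation. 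Controlling this inductive step — in particular, verifying that slicing preserves the non-cylindrical hypothesis in every remaining direction — is expected to be the main obstacle.
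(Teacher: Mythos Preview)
Your reduction to $\bar\zeta_j\in H^2(\mi_\alpha)$ and the reformulation via $W_j=\bigcap_k\{a_k^{(j)}=0\}$ are correct and match the paper.

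For necessity your argument is correct (modulo the routine check that $\log w\in L^1$ on almost every fiber, which holds because $\abs{\nabla\phi}$ restricts to a nontrivial rational function there) but takes a different route from the paper. The paper argues by contradiction: if holomorphic polynomials $p_k\to\bar\zeta_j$ in $L^2(\mi_\alpha)$, then on almost every fiber over $W_j$ one gets convergence in some $L^s(\T)$ with $0<s<1$, contradicting closedness of the Hardy space $H^s(\T)$. Your Szeg\H o outer-function construction is more explicit, exhibiting a concrete nonzero element of $L^2(\mi_\alpha)\ominus H^2(\mi_\alpha)$; both approaches ultimately rest on the fact that $\bar\zeta_j$ is not a Hardy boundary value on the fiber.

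For sufficiency there is a genuine gap. You correctly isolate the core problem as showing $1/a_0^{(j)}\in H^2(\mi_\alpha)$, but your proposed attack---the reflection identity $p=\omega\tilde q$, cross-multiplication, and induction on $n$---is not a proof: the resulting relations still mix $\zeta_j$ and $\bar\zeta_j$, and you yourself flag the inductive step as an unresolved obstacle. The paper bypasses all of this with one elementary observation you have missed: since $\phi(0,\hat\zeta_j)\in\Db$ for every $\hat\zeta_j\in\Db^{n-1}$, one has $a_0^{(j)}(\hat\zeta_j)=(p-\alpha q)(0,\hat\zeta_j)\neq 0$ throughout the \emph{open} polydisc $\Db^{n-1}$. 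Hence $1/a_0^{(j)}$ is holomorphic there, the dilations $1/a_0^{(j)}(\rho\,\cdot\,)$ belong to $H^2(\mi_\alpha)$ for every $\rho<1$, and a one-variable lower bound for zero-free polynomials (Lemma~\ref{lem:13}, applied along the rays $t\mapsto t\hat\zeta_j$) supplies the uniform domination needed for $L^2$-convergence as $\rho\to1^-$. No induction is required. Your intermediate claim that $(\pr_j)_*\mi_\alpha\ll\Hc^{n-1}$ also needs justification; the paper instead shows directly, via a common-factor argument (Lemma~\ref{lem:14}), that $\{a_0^{(j)}\circ\pr_j=0\}\cap V_\alpha$ has dimension $\meg n-2$, and this is precisely where the no-cylinder hypothesis enters.
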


This extends~\cite[Theorems 4.2 and 4.3]{ABBCS} to the case of general rational inner functions on general polydiscs. The proof is similar but more involved. We begin with a few lemmas.

\begin{lem}\label{lem:12}
	Let $\phi\colon D\to \Db$ be a holomorphic function, and take $\alpha\in \T$. If $f,g\in H^2(\mi_\alpha)$ and $f$ is bounded, then $f g\in H^2(\mi_\alpha)$.
\end{lem}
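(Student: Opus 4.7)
The plan is to reduce the problem to the elementary observation that polynomials form a bounded multiplier algebra on $L^2(\mi_\alpha)$, since $\mi_\alpha$ is supported on the compact set $\bD$. The key subtlety I expect is that a sequence of polynomials $p_k\to f$ in $L^2(\mi_\alpha)$ need not be uniformly bounded in $L^\infty(\mi_\alpha)$, even though the limit $f$ is bounded; so one cannot directly multiply two sequences of approximating polynomials and expect $L^2$-convergence of the products to $fg$. The standing hypothesis that $f$ is bounded has to be used precisely to close this gap, but only at the \emph{outer} layer of the argument.

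Here is the skeleton I would follow. First, I would fix a sequence of holomorphic polynomials $q_n\to g$ in $L^2(\mi_\alpha)$ (possible by definition of $H^2(\mi_\alpha)$). Since $\bD$ is compact and each $q_n$ is a polynomial, $q_n$ is bounded on $\Supp{\mi_\alpha}\subseteq \bD$, and multiplication by $q_n$ is a bounded operator on $L^2(\mi_\alpha)$. Second, for each fixed $n$, I would approximate $f$ itself by holomorphic polynomials $p_k\to f$ in $L^2(\mi_\alpha)$; then $p_kq_n$ is again a holomorphic polynomial, and
\[
\norma{p_kq_n-fq_n}_{L^2(\mi_\alpha)}\meg \norma{q_n}_{L^\infty(\bD)}\norma{p_k-f}_{L^2(\mi_\alpha)}\xrightarrow[k\to\infty]{}0,
\]
which gives $f q_n\in H^2(\mi_\alpha)$. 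This inner step only uses that $q_n$ (a polynomial) is bounded on $\bD$, \emph{not} that $f$ is bounded.

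Finally, I would invoke the boundedness of $f$ to pass to the limit in $n$: since $\abs{f}\meg \norma{f}_{L^\infty(\mi_\alpha)}$ pointwise $\mi_\alpha$-a.e.,
\[
\norma{fq_n-fg}_{L^2(\mi_\alpha)}\meg \norma{f}_{L^\infty(\mi_\alpha)}\norma{q_n-g}_{L^2(\mi_\alpha)}\xrightarrow[n\to\infty]{}0,
\]
so $fg$ is the $L^2(\mi_\alpha)$-limit of the sequence $(fq_n)\subseteq H^2(\mi_\alpha)$, hence belongs to $H^2(\mi_\alpha)$. The proof is essentially soft and the only conceptual point is the order of the two limits: first let $k\to\infty$ (using polynomial boundedness of $q_n$) to produce $fq_n\in H^2(\mi_\alpha)$, then let $n\to\infty$ (using $L^\infty$-boundedness of $f$) to produce $fg$.
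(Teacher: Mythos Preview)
Your proof is correct and follows essentially the same approach as the paper's: first use that each approximating polynomial $q_n$ is bounded on the compact set $\bD$ to get $f q_n\in H^2(\mi_\alpha)$, then use the boundedness of $f$ to pass to the limit $f q_n\to f g$ in $L^2(\mi_\alpha)$. Your write-up is slightly more explicit about the estimates and the order of limits, but the argument is the same.
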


\begin{proof}
	Take two sequences $(p_j)$, $(q_j)$ of holomorphic polynomials on $\C^n$ such that $p_j\to f$ and $q_j\to g$ in $L^2(\mi_\alpha)$. Then, $p_j q_k\to f q_k$ in $L^2(\mi_\alpha)$, so that $f q_k\in H^2(\mi_\alpha)$ for every $k\in\N$. Since $f$ is bounded, it is also clear that $ f q_k\to  f g$ in $L^2(\mi_\alpha)$, whence the conclusion.
\end{proof}

\begin{lem}\label{lem:13}
	Take $\eps\in (0,1)$ and let $C$ be the convex envelope of $B_\C(0,\eps)\cup \Set{1}$. If $p$ is a holomorphic polynomial in $\C$ of degree $k\in \N$ such that $p(z)\neq 0$ for every $z\in C$, then $\abs{p(x)}\Meg (\eps/2)^k \abs{p(1)}$ for every $x\in [0,1]$.
\end{lem}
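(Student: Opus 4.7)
The plan is to factor $p$ and reduce the lemma to a single geometric inequality, one per root. Writing
\[
p(z)=p(1)\prod_{j=1}^k\frac{z-a_j}{1-a_j},
\]
the hypothesis that $p$ does not vanish on $C$ forces each root $a_j\in \C\setminus C$, since $1\in C$. Thus it suffices to prove
\[
\frac{\abs{x-a}}{\abs{1-a}}\Meg \frac{\eps}{2}\qquad\text{for every } a\in\C\setminus C \text{ and } x\in[0,1],
\]
and to multiply the resulting bounds.

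I would prove this estimate by case analysis on $\alpha\coloneqq\Re a$. If $\alpha\Meg 1$, then $1$ is the closest point of $[0,1]$ to $a$, so $\abs{x-a}\Meg\abs{1-a}$ and the conclusion is immediate. If $\alpha\meg 0$, then $0$ is the closest point, so $\abs{x-a}\Meg\abs{a}$; assuming for contradiction $\abs{a}<(\eps/2)\abs{1-a}\meg(\eps/2)(1+\abs{a})$ yields $\abs{a}<\eps/(2-\eps)<\eps$ (using $\eps<1$), placing $a$ inside $B_\C(0,\eps)\subseteq C$, a contradiction.

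The substantive case is $\alpha\in[0,1]$. There the minimum of $\abs{x-a}$ over $x\in[0,1]$ is $\abs{\Im a}$, attained at $x=\alpha$, and assuming $\abs{\Im a}<(\eps/2)\abs{1-a}$, I would square and isolate to obtain $\abs{\Im a}<\frac{\eps}{\sqrt{4-\eps^2}}(1-\alpha)$. To reach a contradiction with $a\notin C$, I would use the explicit description of $\partial C$ in the upper half-plane: for $\alpha\in[\eps^2,1]$ it is the tangent segment from $(\eps^2,\eps\sqrt{1-\eps^2})$ to $1$, along which $\Im z=(\eps/\sqrt{1-\eps^2})(1-\Re z)$, and since $\sqrt{1-\eps^2}\meg\sqrt{4-\eps^2}$ the condition above places $a$ strictly below this segment and hence inside $C$. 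For $\alpha\in[0,\eps^2]$ the relevant boundary is the circular arc $\abs{z}=\eps$, so the contradiction reduces to the inequality $\frac{\eps}{\sqrt{4-\eps^2}}(1-\alpha)\meg\sqrt{\eps^2-\alpha^2}$; after clearing denominators this becomes the concave quadratic inequality
\[
-4\alpha^2+2\eps^2\alpha+\eps^2(3-\eps^2)\Meg 0,
\]
whose minimum on $[0,\eps^2]$ is attained at an endpoint by concavity. Both endpoint values $\eps^2(3-\eps^2)$ and $3\eps^2(1-\eps^2)$ are positive for $\eps\in(0,1)$, yielding $\abs{a}<\eps$ and thus $a\in B_\C(0,\eps)\subseteq C$, the desired contradiction.

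The main obstacle is simply making the geometry of $C$ explicit (an ``ice-cream cone'' formed by the disc $B_\C(0,\eps)$ together with the triangular wedge bounded by the two tangent segments from $1$ to the tangent points $(\eps^2,\pm\eps\sqrt{1-\eps^2})$), and verifying the final polynomial inequality; once those are in place the proof is elementary.
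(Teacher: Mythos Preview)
Your proof is correct and follows the same overall strategy as the paper: factor $p$ over its roots, reduce to the per-root inequality $\abs{x-a}\Meg(\eps/2)\abs{1-a}$ for $a\notin C$ and $x\in[0,1]$, and split according to $\Re a$. The only difference is in the middle case $\Re a\in(0,1)$: you work with the exact boundary of $C$ (tangent segment for $\Re a\in[\eps^2,1]$, circular arc for $\Re a\in[0,\eps^2]$) and verify a quadratic inequality, whereas the paper uses the cruder but cleaner observation that $C$ contains the triangle with vertices $\pm i\eps$ and $1$, so $a\notin C$ forces $\abs{\Im a}\Meg\eps(1-\Re a)$, from which
\[
\abs{\Im a}\Meg \tfrac{1}{2}\abs{\Im a}+\tfrac{\eps}{2}(1-\Re a)\Meg \tfrac{\eps}{2}\bigl(\abs{\Im a}+(1-\Re a)\bigr)\Meg \tfrac{\eps}{2}\abs{1-a}.
\]
Your route gives a sharper intermediate bound $\abs{\Im a}<\frac{\eps}{\sqrt{4-\eps^2}}(1-\Re a)$ but at the cost of a sub-case split; the paper's inscribed-triangle trick avoids both the split and the quadratic computation.
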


\begin{proof}
	\textsc{Step I} Let us first prove that, if $A\coloneqq \C\setminus C$ and $d\colon A\to (0,+\infty)$ is defined so that $d(z)=\min_{x\in [0,1]} \abs{z-x}$ for every $z\in A$, then  $d(z)\Meg (\eps/2) \abs{z-1}$. In other words, let us prove the assertion for  $k=1$. 
	To this aim, fix $z\in A$ and observe that $[0,1]\ni x \mapsto\abs{z-x}$  takes its minimum at $0$ if $\Re z \meg 0$, at $1$ if $\Re z \Meg 1$, and at $\Re z$ otherwise. 
	In particular, $d(z)= \abs{z-1}$ if $\Re z \Meg 1$, while 
	\[
	d(z)=\abs{\Im z} \Meg \frac{\abs{\Im z}}{2}+\frac{\eps}{2}(1-\Re z)\Meg \frac{\eps}{2}\abs{z-1}
	\]
	if $\Re z\in (0,1)$, since $C$ contains the triangle with vertices $\pm \eps i$ and $1$.
	Finally, 
	\[
	d(z)= \abs{z}\Meg \frac{\abs{z}}{1+\abs{z}}\abs{z-1} \Meg \frac{\eps}{1+\eps}\abs{z-1}\Meg \frac{\eps}{2}\abs{z-1}
	\]
	if $\Re z \meg 0$,	since $\abs{z}\Meg \eps$ (and $\eps\in (0,1)$).   
	
	\textsc{Step II} Take $p$ as in the statement. Take $\alpha\in \C \setminus \Set{0}$ and $z_1,\dots, z_k\in A$  such that $p(z)= \alpha \prod_{j=1}^k (z-z_j)$. Then,~\textsc{step I} shows that 
	\[
	\abs{p(x)}=\abs{\alpha} \prod_{j=1}^k \abs{x-z_j} \Meg \abs{\alpha} \prod_{j=1}^k \left(\frac{\eps}{2}\right)^k  \abs{1-z_j}  \Meg  \left(\frac{\eps}{2}\right)^k   \abs{p(1)}
	\]
	for every $x\in [0,1]$,	so that the assertion follows.
\end{proof}

\begin{proof}[Proof of Theorem~\ref{teo:4}.]
	\textsc{Step I} Assume first that $V_\alpha\coloneqq\Cl(\phi_1^{-1}(\alpha))$ does not contain any set of the form $\pr_j^{-1}(V)$, where $j=1,\dots, n$ and  $V$ is a (real) algebraic hypersurface in $\T^{n-1}$.  
	Then,  let us prove that for every $j=1,\dots, n$ there is a rational function $r_j$ on $\C^n$, defined (and holomorphic) on $D$, such that $r_j(\zeta)$ is defined and equals $\overline{\zeta_j}$   for $\Hc^{n-1}$-almost every $\zeta\in \phi^{-1}_1(\alpha)$ (as we shall see, this implies that $r_j\in H^2(\mi_\alpha)$ for every $j=1,\dots, n$). 
	Take two coprime holomorphic polynomials $p,q$ on $\C^n$ such that $\phi=p/q$ on $D$, so that $ V_\alpha$ is the zero set of $p-\alpha q$ on $\bD=\T^n$, thanks to Theorem~\ref{teo:3}.
	For simplicity, we may assume that $j=1$, and write $z=(z_1,z')$ for every $z\in D$. Then, observe that we may take (unique) holomorphic polynomials $p_{1}, p_{2}, q_{1}, q_{2}$ such that  $p(z)=p_{1}(z') + z_1 p_{2}(z) $ and $q(z)=q_1(z')+z_1 q_2(z)$, so that
	\[
	\phi(z)= \frac{p_{1}(z') + z_1 p_{2}(z) }{q_1(z')+z_1 q_2(z)}
	\]
	for every $z=(z_1,z')\in D$.
	
	Let us  prove that $p_1-\alpha q_1$ vanishes \emph{nowhere} on $\Db^{n-1}$ and that the zero set of $  (p_1 -\alpha q_1)\circ \pr_1$ in $V_\alpha$ has dimension $\meg n-2$. 
	Observe first that, if $z'\in \Db^{n-1}$ and $p_1(z')=\alpha q_1(z')$, then $(0,z')\in D$ and $\phi(0,z')=\frac{p_1(z')}{q_1(z')}=\alpha\in \T$, which is absurd since $\phi$ maps $D$ in $\Db$. This proves that $p_1-\alpha q_1$ vanishes nowhere on $\Db^{n-1}$. 
	Next, assume by contradiction that the zero set of $ (p_1 -\alpha q_1)\circ \pr_1$ in $V_\alpha$ has dimension $> n-2$, hence $n-1$ (since $V_\alpha$ has dimension $n-1$). 
	Then, the joint zero set of $ (p_1 -\alpha q_1)\circ \pr_1$ and $p-\alpha q$ in $\bD$ has real dimension $n-1$, so that Lemma~\ref{lem:14} implies that the joint zero set of $ (p_1 -\alpha q_1)\circ \pr_1$ and $p-\alpha q$  in $\C^n$ has complex dimension $n-1$. 
	Then, there must be a non-constant holomorphic polynomial $p_3$ which divides both $ (p_1 -\alpha q_1)\circ \pr_1$ and $p-\alpha q$, so that  $ p_3=p_4\circ \pr_1$ for some holomorphic polynomial $p_4 $ on $\C^{n-1}$. 
	Then, the zero set of $p_3$ in $\bD$ is   of the form $\T\times V$, where $V$ is the zero set of $p_4$ in $\T^{n-1}$, and is contained in $V_\alpha$. If we further assume that $ [(p_1 -\alpha q_1)\circ \pr_1]/p_3$ and $(p-\alpha q)/p_3$ are coprime polynomials (as we may), by means of Lemma~\ref{lem:14} it then follows that $V$ must have dimension $n-2$: contradiction.
	
	We may then define
	\[
	r_1(z)\coloneqq \frac{\alpha q_2 (z)- p_2(z) }{ p_1(z')- \alpha q_1(z')}
	\]
	for $z=(z_1,z')\in D$,	so that $r_1$ is defined and holomorphic on $D$ and $r_1(\zeta)=\overline{\zeta_1}$	for $\Hc^{n-1}$-almost every $\zeta=(\zeta_1,\zeta')\in V_\alpha$.
	
	For every $\rho \in (0,1)$ we may also define the rational function
	\[
	r_{1,\rho}(z)\coloneqq \frac{\alpha q_2 (z)- p_2(z) }{ p_1(\rho z')- \alpha q_1(\rho z')},
	\]
	so that  $r_{1,\rho}$ is defined and holomorphic in a neighbourhood of the closure $\Cl(D)$ of $D$, and   $r_{1,\rho}(\zeta)\to r_1(\zeta)=\overline{\zeta_1}$ for $\Hc^{n-1}$-almost every $\zeta=(\zeta_1,\zeta')\in V_\alpha$. 
	In addition, for $\Hc^{n-1}$-almost every $\zeta=(\zeta_1,\zeta')\in V_\alpha$ one has $p_1(\zeta')- \alpha q_1(\zeta')\neq 0$, so that $\abs{r_{1,\rho}(\zeta)}\meg 2^{-k}\abs{r_1(\zeta)}=2^{-k}$ (where $k=\deg (p_1-\alpha q_1)$), thanks to Lemma~\ref{lem:13}. 
	Consequently, by the dominated convergence theorem we see that $r_{1,\rho}\to r_1$ in $L^2(\mi_\alpha)$ for $\rho \to 1^-$. Since $r_{1,\rho}\in H^2(\mi_\alpha)$ for every $\rho\in (0,1)$ (cf., e.g.,~\cite[Remark 2.17]{BoundedClark}), this proves that $r_1\in H^2(\mi_\alpha)$.
	
	Now, Lemma~\ref{lem:12} and the previous remarks show that for every $\gamma,\delta\in \N^n$ the monomial $\zeta \mapsto \zeta^\gamma \overline{\zeta}^\delta$ belongs to $H^2(\mi_\alpha)$. Consequently, $C(\bD)\subseteq H^2(\mi_\alpha)$ by the Stone--Weierstrass theorem, so that $L^2(\mi_\alpha)=H^2(\mi_\alpha)$.

	\textsc{Step II} Assume that  $\Cl(\phi^{-1}_1(\alpha))\supseteq\T\times V$, where $V$ is an algebraic hypersurface in $\T^{n-1}$ (for the sake of simplicity, we are taking $j=1$; the other cases are similar). 
	Assume by contradiction that $L^2(\mi_\alpha)=H^2(\mi_\alpha)$. 
	Then, there is a sequence $(p_k)$ of holomorphic polynomials on $\T^n$ such that $p_k\to \overline L$ in $L^2(\mi_\alpha)$, where $L(z)=z_1$ for every $z\in \C^n$. 
	By Theorem~\ref{teo:3}, up to a subsequence we may assume that $p_k(\,\cdot\,,\zeta')\to \overline{\,\cdot\,}$ in $L^2(\T, \abs{\nabla \phi(\,\cdot\,,\zeta')}^{-1}\cdot \Hc^1)$ for $\Hc^{n-2}$-almost every $\zeta'\in V$. 
	In particular,  if $\phi=p/q$ for some coprime holomorphic polynomials $p$ and $q$ on $\C^n$, we may take $\zeta'\in V$ so that $q(\,\cdot\,,\zeta')$ has only finitely many zeros on $\T$ (cf.~Theorem~\ref{teo:3}) and   $p_k(\,\cdot\,,\zeta')\to \overline{\,\cdot\,}$ in $L^2(\T, \abs{\nabla \phi(\,\cdot\,,\zeta')}^{-1}\cdot \Hc^1)$. 
	Then, $\abs{\nabla \phi(\,\cdot\,,\zeta')}^{1/2}$ belongs to $L^s(\T)$ for some $s\in (0,1)$, so that    $p_k(\,\cdot\,,\zeta')\to \overline{\,\cdot\,}$ in $L^{s_2}(\T)$, where $\frac{1}{s_2}=\frac{1}{2}+\frac{1}{s}$.  
	Since the functions $p_k(\,\cdot\,,\zeta')$ belong to the boundary value space of $H^{s_2}(\Db)$, which is closed in $L^{s_2}(\T)$, this leads to a contradiction.
\end{proof}

\section{Jordan Triple Systems}\label{sec:3}

In order to provide some (partial) analogues of Theorem~\ref{teo:4} for general symmetric domains, we need to use the Jordan triple system on $\C^n$ induced by $D$.  We refer the reader to~\cite{Loos} for a more comprehensive discussion of this subject. See also~\cite{MackeyMellon, SiegelClark}.  

\begin{deff}
	A (finite-dimensional) positive Hermitian Jordan triple system is a (finite-dimensional) complex vector space $Z$ endowed with a `triple product' $\{\,\cdot\,,\,\cdot\,,\,\cdot\,\}\colon Z\times Z\times Z\to Z$ such that the following hold:
	\begin{itemize}
		\item $\{x,y,z\}$ is $\C$-bilinear and symmetric in $(x,z)$, and $\C$-antilinear in $y$;
		
		\item $[D(a,b), D(x,y)]=D(D(a,b) x,y)-D(x,D(b,a)y) $ for every $a,b,x,y,z\in Z$, where $D(a,b)=\{a,b,\,\cdot\,\}$ for every $a,b\in Z$;
		
		\item if $\{x,x,x\}=\lambda x$ for some non-zero $x\in Z$ and some $\lambda\in \C$, then $\lambda>0$.
	\end{itemize}
	We also define  $Q(x)\coloneqq \{x,\,\cdot\,,x\}$ for every $x\in Z$. The Jordan triple system $Z$ is said to be simple if it is not isomorphic to the product of two non-trivial (positive Hermitian) Jordan triple systems.
	
	Denote by $\Aut(Z)$ the group of automorphisms of $Z$, that is, the group of the $\C$-linear automorphisms $u$ of the vector space $Z$ such that $\{u x,u y, uz\}=u\{x,y,z\}$ for every $x,y,z\in Z$, and let $\Aut_0(Z)$ be the component of the identity in $\Aut(Z)$. 
	
	An $e\in Z$ is called a tripotent if $\{e,e,e\}=e$.\footnote{There is no general agreement on the definition of a tripotent; for instance, $e$ is a tripotent according to~\cite{Loos} if and only if $\sqrt 2 e $ is a tripotent according to our definition. In general, there are also other powers of $\sqrt 2$ that may differ from one reference to another.} Two tripotents $e,e'\in Z$ are said to be orthogonal if $D(e,e')=0$ (equivalently, if $\{e,e,e'\}=0$; both conditions are symmetric in $e,e'$, cf.~\cite[Lemma 3.9]{Loos}).
	A non-zero tripotent is primitive (resp.\ maximal) if it cannot be written as the sum of two non-zero orthogonal tripotents (resp.\ if it is not ortoghonal to any non-zero tripotent). The rank $\rk(e)$ of a tripotent $e$ is the maximal length (i.e., the number of terms)  of a sequence of pairwise orthogonal non-zero tripotents $(e_j)$ such that $e=\sum_j e_j$. 
	A frame is a maximal sequence of pairwise orthogonal non-zero tripotents. The length of such sequences is called the rank $r$ of $Z$.
\end{deff}

The vector spaces generated by any two frames are conjugate under $\Aut_0(Z)$, as well as any two maximal tripotents (cf.~\cite[Proposition 5.2 and Theorem 5.3]{Loos}). In particular, the rank is well defined.
If, in addition, $Z$ is simple (i.e., cannot be decomposed as the product of two non-trivial positive Hermitian Jordan triple systems), then any two frames are conjugate under $\Aut_0(V)$  (cf.~\cite[Theorem 5.9]{Loos}). 

\begin{prop}\label{prop:20}
	Let $Z$ be a positive Hermitian Jordan triple system. Then, the following hold:
	\begin{enumerate}
		\item[\textnormal{(1)}] the mapping $(x,y)\mapsto \Tr D(x,y)$ is a (non-degenerate) $\Aut(Z)$-invariant scalar product on $Z$;
		
		\item[\textnormal{(2)}] $D(x,y)^*=D(y,x)$ with respect to the scalar product in~\textnormal{(1)}, that is,
		\[
		\langle \{x,y,z\}\vert u \rangle=\langle x\vert \{y,z,u\}\rangle
		\]
		for every $x,y,z,u\in Z$;
		
		\item[\textnormal{(3)}]  if $e,e'$ are two orthogonal tripotents in $Z$, then $D(e,e)$ and $D(e',e')$ commute, and $e+e'$ is a tripotent;
		
		\item[\textnormal{(4)}] (`spectral decomposition') for every non-zero $x\in Z$ there are a unique strictly increasing family $(\lambda_j)_{j=1,\dots, k}$ of elements of $(0,+\infty)$, and a unique family $(e_j)_{j=1,\dots,k}$ of pairwise orthogonal non-zero tripotents such that
		\[
		x=\sum_{j=1}^k \lambda_j e_j;
		\]
		
		\item[\textnormal{(5)}] with the notation of~\textnormal{(4)}, $D(x,x)=\sum_{j=1}^k \lambda_j^2 D(e_j,e_j)$ and $\norm{D(x,x)}=\lambda_k^2$ with respect to the scalar product in~\textnormal{(1)};
		
		\item[\textnormal{(6)}] the circular convex open set $\Set{x\in Z\colon \norm{D(x,x)}<1}$ is a bounded symmetric domain with \v Silov boundary equal to the set of maximal tripotents.
	\end{enumerate}
\end{prop}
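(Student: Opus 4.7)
The plan is to derive (1)--(6) in sequence, using the three defining axioms of a positive Hermitian Jordan triple system and citing~\cite{Loos} for the deeper structural results. For (1), sesquilinearity and $\Aut(Z)$-invariance of the form $(x,y)\mapsto \Tr D(x,y)$ are immediate from the $\C$-bi-/antilinearity of $D$ and from the identity $D(ux,uy)=u D(x,y) u^{-1}$ valid for $u\in \Aut(Z)$; positive definiteness would be deferred to after (4) and (5). For (2), I would take traces in the Jordan identity
\[
[D(a,b),D(x,y)]=D(\{a,b,x\},y)-D(x,\{b,a,y\})
\]
and exploit that $\Tr[\,\cdot\,,\,\cdot\,]=0$ to obtain $\Tr D(\{a,b,x\},y)=\Tr D(x,\{b,a,y\})$, which in terms of the form of (1) reads $\langle \{a,b,x\}\vert y\rangle=\langle x\vert \{b,a,y\}\rangle$, i.e., $D(a,b)^*=D(b,a)$.

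For (3), observe that orthogonality of $e,e'$ (symmetric in $e,e'$ by assumption) forces all mixed triple products involving $e$ and $e'$ to vanish; the trilinear expansion of $\{e+e',e+e',e+e'\}$ then collapses to $\{e,e,e\}+\{e',e',e'\}=e+e'$, and commutativity of $D(e,e)$ and $D(e',e')$ follows by setting $a=b=e$, $x=y=e'$ in the Jordan identity together with $\{e,e,e'\}=0$. For (4) I would invoke the spectral theorem of~\cite[\S 3]{Loos}: diagonalize the self-adjoint operator $D(x,x)$ (by (2)) and extract the tripotents via a functional calculus in the odd triple powers of $x$, uniqueness coming from the third axiom.

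Once (4) is in hand, (5) is almost immediate: pairwise orthogonality of the $e_j$ gives $D(e_i,e_j)=0$ for $i\neq j$, whence $D(x,x)=\sum_j \lambda_j^2 D(e_j,e_j)$. For $\norm{D(x,x)}=\lambda_k^2$ I would use the simultaneous Peirce decomposition: the commuting positive self-adjoint operators $D(e_j,e_j)$ have spectra in $\{0,1/2,1\}$, and $\sum_j D(e_j,e_j)=D(\sum_j e_j,\sum_j e_j)$ is itself the Peirce operator of a tripotent, so the joint eigenvalues $\mu_j$ must satisfy $\sum_j \mu_j\in\{0,1/2,1\}$; this forces $\sum_j\lambda_j^2\mu_j\meg\lambda_k^2$ on each joint eigenspace, with equality realized at $e_k$. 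Positive definiteness of the form in (1) follows as a corollary, since $D(x,x)\Meg 0$ and $\Tr D(x,x)=0$ would force all $\lambda_j=0$, hence $x=0$. Finally, (6) is the classical Koecher--Loos correspondence, which I would simply cite from~\cite[Theorems 4.1 and 6.5]{Loos}. The only genuinely nontrivial step is the spectral decomposition (4), the heart of the structure theory of these systems, and this I would substantiate by reference to~\cite{Loos} rather than reproduce.
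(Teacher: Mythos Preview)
Your proposal is correct and in the same spirit as the paper's proof, which simply cites \cite{Loos} for each item (Corollary 3.16 for (1)--(2), Lemma 3.9 for (3), Corollary 3.12 for (4), Theorems 3.17 and 6.5 for (5)). Where you go further is in supplying the elementary arguments for (1)--(3) and the joint-Peirce argument for (5) explicitly; these are standard and correct, and your computation of $\norm{D(x,x)}$ via the constraint $\sum_j\mu_j\meg 1$ on joint Peirce eigenvalues is precisely the idea behind \cite[Theorem 3.17]{Loos}. One minor caution on the logical order: since the operator norm in (5) is taken with respect to the scalar product of (1), your deferred proof of positive definiteness should invoke only the trace identity $\Tr D(x,x)=\sum_j\lambda_j^2\,\Tr D(e_j,e_j)>0$ (which needs just the diagonalizability of $D(e_j,e_j)$ with spectrum in $\{0,1/2,1\}$, an algebraic consequence of the Jordan identity independent of any inner product) and not the norm statement itself.
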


\begin{proof}
	(1)--(2) This is~\cite[Corollary 3.16]{Loos}.
	
	(3) Cf.~\cite[Lemma 3.9]{Loos}.
	
	(4) Cf.~\cite[Corollary 3.12]{Loos}.
	
	(5) Cf.~\cite[Theorem 3.17 and Theorem 6.5]{Loos}.
\end{proof}

\begin{nott}
	The converse of (6) of  Proposition~\ref{prop:20} holds, cf.~\cite[Theorem 4.1]{Loos}. Consequently, we shall endow $\C^n$ with the unique (positive Hermitian) Jordan triple system structure whose associated symmetric domain is $D$ (we still keep the notation introduced in Notation~\ref{not:1}). In addition, we shall assume that the natural scalar product of $\C^n$ is  $(x,y)\mapsto \Tr D(x,y)$. With this convention, the rank of a tripotent $e$ is $\abs{e}^2$.
\end{nott}

\begin{deff}
	A  complex Jordan algebra is a  complex commutative algebra $A$ such that
	\[
	x^2(xy)=x(x^2 y)
	\]
	for every $x,y\in A$.
		
	An $x\in A$ is said to be invertible if the endomorphism $P x\colon y \mapsto 2 x(x y)-x^2 y$ of $A$ is invertible, in which case $x^{-1}\coloneqq (Px)^{-1}x$ is the inverse of $x$.
\end{deff}

\begin{oss}
	Let $A$ be a  complex Jordan algebra with identity $e$. Then, $x\in A$ is invertible if and only if it is invertible in the (associative) algebra  $\C[e,x]$ generated by $x$ and $e$, in which case $x^{-1}$ coincides with the inverse of $x$ in  $\C[e,x]$. (Cf.~\cite[Proposition II.3.1]{FarautKoranyi2}.)
\end{oss}

\begin{prop}\label{prop:21}
	(`Peirce decomposition') Let $e$ be a tripotent in $\C^n$. Denote by $V_\alpha(e)$ the eigenspace of $D(e,e)$ corresponding to the eigenvalue $\alpha\in \C$. 
	Then, $\C^n=V_1(e)\oplus V_{1/2}(e) \oplus V_0(e)$ (orthogonal sum) and $\{ V_j(e), V_k(e), V_\ell(e) \}\subseteq V_{j-k+\ell}(e)$ for every $j,k,\ell\in \Set{0,1/2,1}$. 
	
	Furthermore, $V_1(e)$ is a  complex Jordan algebra with the product $(x,y)\mapsto \{x,e,y\}$, for which $e$ is the identity; the mapping $Q(e)\colon x \mapsto   \{e,x,e\}  $ is a conjugation on  $V_1(e)$.
	
	Finally, the orthogonal projectors of $\C^n$ onto $V_1(e)$, $V_{1/2}(e)$, and $V_0(e)$ are $Q(e)^2$, $2(D(e,e)-Q(e)^2)$, and  $B(e,e)=I-2 D(e,e)+Q(e)^2$, respectively.
\end{prop}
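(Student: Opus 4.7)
The plan is to derive the assertions in order, using as the main tool the Jordan identity $[D(a,b),D(x,y)] = D(D(a,b)x,y) - D(x,D(b,a)y)$ together with the self-adjointness of $L := D(e,e)$ supplied by Proposition~\ref{prop:20}(2); I shall write $Q := Q(e)$ throughout. The first task is to exhibit a cubic polynomial identity for $L$ that forces its spectrum into $\{0,\tfrac12,1\}$. Specialising the Jordan identity to $a=b=e$ yields $[L, D(x,y)] = D(Lx, y) - D(x, Ly)$; applying this with $x = e$ and evaluating at $e$, together with $Le = e$ and $Qe = e$ (both consequences of $\{e,e,e\}=e$), produces the commutation relation $LQ + QL = 2Q$. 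Iterating this and exploiting the self-adjointness, I expect to derive $L(2L - I)(L - I) = 0$. The spectral theorem for symmetric operators then delivers the orthogonal decomposition $\C^n = V_1(e) \oplus V_{1/2}(e) \oplus V_0(e)$.

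For the grading, I apply $[L, D(x,y)] = D(Lx, y) - D(x, Ly)$ to a generic $u \in V_\ell(e)$ with $x \in V_j(e)$ and $y \in V_k(e)$: since $Lx = jx$, $Ly = ky$ and $Lu = \ell u$, one computes directly $L\{x,y,u\} = (j-k+\ell)\{x,y,u\}$, so $\{x,y,u\} \in V_{j-k+\ell}(e)$ whenever this eigenvalue occurs and vanishes otherwise. In particular $\{e, V_\alpha(e), e\} \subseteq V_{2-\alpha}(e)$, which vanishes for $\alpha \in \{0, \tfrac12\}$; hence $Q$ kills $V_{1/2}(e) \oplus V_0(e)$ and preserves $V_1(e)$.

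For $V_1(e)$ as a complex Jordan algebra under $(x,y) \mapsto \{x,e,y\}$, stability follows from the grading ($V_1 \cdot V_1 \subseteq V_{1-1+1} = V_1$), commutativity is the symmetry of the triple product in its outer arguments, and $\{x,e,e\} = \{e,e,x\} = Lx = x$ identifies $e$ as the unit; the Jordan axiom $x^2(xy) = x(x^2 y)$ is a direct specialisation of the triple identity. That $Q$ is $\C$-antilinear is built into its definition, and the identities $Q^2|_{V_1(e)} = I$ and $Q(x \cdot y) = Qx \cdot Qy$ on $V_1(e)$---i.e.\ the conjugation property---come from careful iteration of the triple identity on expressions of the form $\{e, \{e,x,e\}, e\}$. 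Once $Q^2$ is identified as the orthogonal projector onto $V_1(e)$, the remaining projector formulas follow by inspection: $2(L - Q^2)$ and $I - 2L + Q^2$ take the values $(0,1,0)$ and $(0,0,1)$ on $(V_1(e), V_{1/2}(e), V_0(e))$ respectively.

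The hard step is unquestionably the derivation of the cubic identity $L(2L - I)(L - I) = 0$ from the Jordan triple identity: this is the technical engine of the whole Peirce decomposition, after which the remaining assertions unfold as eigenvalue computations and formal consequences of the grading.
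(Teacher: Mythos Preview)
The paper does not actually prove this proposition: it simply refers to~\cite[Theorem 3.13]{Loos} for the Peirce decomposition and to~\cite[p.~5]{MackeyMellon} for the projector formulas. Your outline is essentially the standard argument that one finds in Loos, so in that sense there is no divergence of approach to discuss.

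That said, your sketch has a genuine gap precisely where you flag it. The relation $LQ+QL=2Q$ together with self-adjointness of $L$ is \emph{not} enough to force the spectrum of $L$ into $\{0,\tfrac12,1\}$: from $LQ+QL=2Q$ one only obtains $[L,Q^2]=0$ and that $Q$ intertwines the $\lambda$- and $(2-\lambda)$-eigenspaces of $L$, which places no constraint on eigenvalues $\lambda$ with $Q|_{V_\lambda}=0$. To close the argument one needs a further identity coming from the Jordan triple structure, for instance the fundamental formula $Q(Q(a)b)=Q(a)Q(b)Q(a)$ (specialised to $a=b=e$ it gives $Q^3=Q$, so $Q^2$ is a projection), together with a second application of the five-linear identity yielding $LQ=QL=Q$ and, ultimately, the relation $2L^2-L=Q^2$. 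Only after this does the cubic $L(L-\tfrac12)(L-I)=0$ follow. Similarly, ``careful iteration of the triple identity'' for $Q^2|_{V_1(e)}=I$ hides the same missing ingredient. So your plan is right in spirit, but the step you call ``the hard step'' is not merely laborious: it requires input you have not yet named.
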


Cf.~\cite[Theorem 3.13]{Loos} for a proof of the previous result (cf.~\cite[p.~5]{MackeyMellon} for the last assertion).

\begin{deff}
	We shall define $V_1(e), V_{1/2}(e)$ and $V_0(e)$ as in Proposition~\ref{prop:21}. We shall say that $D$ is of tube type if $V_1(e)=\C^n$ for some (hence every) maximal tripotent $e$.
\end{deff}

\begin{prop}\label{prop:23}
	Let $e$ be a tripotent in $\C^n$, and define $A(e)\coloneqq \Set{z\in V_1(e)\colon z=\{e,z,e\}}$.
	Then, there is a unique holomorphic polynomial $\det$ on $V_1(e)$ such that, if $x\in A(e)$ and $x=\sum_j \lambda_j e_j$ is the spectral decomposition of $x$ (with the $e_j$ pairwise orthogonal non-zero tripotents in $A(e)$ and the $\lambda_j$ distinct real numbers), then $\det(x)= \prod_{j} \lambda_j^{\rk(e_j)}$. 
	In addition, the following hold:
	\begin{enumerate}
		\item[\textnormal{(1)}] $\det$ is irreducible if and only if $V_1(e)$ is simple;
		
		\item[\textnormal{(2)}] if $V_1(e)$ is simple, then  $\det$   is the unique holomorphic polynomial of degree $\rk(e)$ such that $\det (e)=1$ and  such that $\det(k z)=\chi(k) \det z$ for every $k\in \Aut_0(V_1(e))$, where $\chi$ is a character of   $\Aut_0(V_1(e))$;
		
		\item[\textnormal{(3)}]  if $(e_1,\dots, e_{\rk(e)})$ is a frame of tripotents in $A(e)$, then $\det\big(\sum_j \lambda_j e_j\big)=\prod_j \lambda_j$ for every $(\lambda_j)\in \C^r$;
		
		\item[\textnormal{(4)}]  if $V_1(e)$ is simple and  $(e_1,\dots, e_{\rk(e)})$ is a frame of tripotents in $V_1(e)$, then $\abs*{\det\big(\sum_j \lambda_j e_j\big)}=\prod_j \abs{\lambda_j}$ for every $(\lambda_j)\in \C^r$;
		
		\item[\textnormal{(5)}] if $V_1(e)$ is simple, then  ${\det}_\C P(x)=(\det x)^{ \dim_\R V_1(e)/\rk(e)}$ for every $x\in V_1(e)$, where $P(x)=\{x,Q(e)\,\cdot\,,x \}$   (note that $ \dim_\R V_1(e)/\rk(e)$ is an integer).
	\end{enumerate}
\end{prop}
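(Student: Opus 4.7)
The plan is to reduce the entire proposition to the classical theory of Euclidean Jordan algebras as developed in~\cite{FarautKoranyi2}. By Proposition~\ref{prop:21}, $V_1(e)$ is a complex Jordan algebra with identity $e$ and $Q(e)$ is a conjugation on it, so that $A(e)$ is a real Jordan algebra of which $V_1(e)$ is the complexification. First I would check that $A(e)$ is formally real (Euclidean), using the positivity axiom of the Hermitian Jordan triple system on $\C^n$ together with the identity $x^2=\{x,e,x\}=\{x,Q(e)x,x\}$ valid for $x\in A(e)$. Next I would verify that the spectral decomposition of $x\in A(e)$ in the Jordan triple sense coincides with the Euclidean Jordan algebra decomposition: the triple-system tripotents $e_j$ appearing in the decomposition of $x$ lie in $V_1(e)$ (being polynomial expressions in $x$) and are fixed by $Q(e)$ by the uniqueness of the decomposition, hence lie in $A(e)$.

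Once $A(e)$ is identified as a Euclidean Jordan algebra of rank $\rk(e)$, Chapter~II of~\cite{FarautKoranyi2} furnishes a polynomial $\det$ on $A(e)$ satisfying the required spectral formula, which extends uniquely to a holomorphic polynomial on $V_1(e)=A(e)\otimes_\R\C$ by complex linearity; uniqueness is automatic since the set of elements of $A(e)$ with pairwise distinct spectrum is Zariski-open and dense. Items (1), (2), (3), and (5) then transcribe directly: (1) a product decomposition of $V_1(e)$ as Jordan algebras induces a factorization of $\det$, while irreducibility for simple $V_1(e)$ is standard; (2) follows from transitivity of $\Aut_0(V_1(e))$ on frames of $V_1(e)$ in the simple case (cf.~\cite[Theorem~5.9]{Loos}) together with (3), since any polynomial of the prescribed type is pinned down by its values on a single frame; (3) is the spectral formula for real $\lambda_j$, extended to $\C^r$ by polynomial identity; (5) is the classical identity $\det_\R P(x)=(\det x)^{2\dim_\R A(e)/\rk(e)}$ for the quadratic representation on a simple Euclidean Jordan algebra, and $\dim_\R A(e)=\dim_\C V_1(e)=\tfrac{1}{2}\dim_\R V_1(e)$ converts this to the stated exponent after complexification.

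The only non-routine item is (4). Here I would use that $\Aut_0(V_1(e))$ is compact, since by Proposition~\ref{prop:20}(1) its elements preserve the trace form scalar product, and that for simple $V_1(e)$ it acts transitively on frames (\cite[Theorem~5.9]{Loos}). Given a frame $(e_1,\dots,e_{\rk(e)})$ in $V_1(e)$ and any frame $(f_1,\dots,f_{\rk(e)})$ in $A(e)$, pick $u\in \Aut_0(V_1(e))$ with $u(e_j)=f_j$ for every $j$; then (2) and (3) yield
\[
\Bigl|\det\Bigl(\sum_j \lambda_j e_j\Bigr)\Bigr|=|\chi(u)|^{-1}\Bigl|\det\Bigl(\sum_j \lambda_j f_j\Bigr)\Bigr|=|\chi(u)|^{-1}\prod_j|\lambda_j|,
\]
and $|\chi(u)|=1$, since $|\chi|$ is a continuous homomorphism from a compact group to $(0,+\infty)$.

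The main potential obstacle is the preparatory dictionary: showing that $A(e)$ is formally real and that the Jordan triple spectral decomposition of $x\in A(e)$ stays inside $A(e)$, so that the determinant polynomial of~\cite{FarautKoranyi2} matches the spectral formula demanded by the statement. Once this bridge between the Jordan triple language of~\cite{Loos} and the Euclidean Jordan algebra language of~\cite{FarautKoranyi2} is secured, the remainder of the proof is bookkeeping and citation.
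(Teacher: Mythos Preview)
Your proposal is correct and follows essentially the same route as the paper: both reduce everything to the classical Euclidean Jordan algebra theory of~\cite{FarautKoranyi2}, using holomorphic extension from $A(e)$ to $V_1(e)$ for (3) and (5), and for (4) both use that $\Aut_0(V_1(e))$ is compact (so $\abs{\chi}\equiv 1$) together with transitivity on frames in the simple case. The paper simply cites specific results (\cite[Proposition II.2.1, Theorem III.1.2, Proposition III.4.2]{FarautKoranyi2}, \cite[Lemma 2.3]{KoranyiPukanszki}, \cite[Theorem 2.1]{FarautKoranyi}) in place of the preparatory dictionary you spell out, but the underlying strategy is the same.
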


\begin{proof}
	The first assertion follows from~\cite[Proposition II.2.1 and Theorem III.1.2]{FarautKoranyi2}. (1) then follows from~\cite[Lemma 2.3]{KoranyiPukanszki}, while (2) is a consequence of ~\cite[Theorem 2.1]{FarautKoranyi}. Next, 	(3) follows from~\cite[Theorem III.1.2]{FarautKoranyi2} when $(\lambda_j)\in \R^r$, and by holomorphy in the general case. For (4), Observe that $\abs{\det}$ is necessarily invariant under the compact group $\Aut_0(V_1(e))$ by (2), so that the assertion follows from (3) and the fact that any two frames of tripotents are conjugate under $\Aut_0(V_1(e))$ (cf.~\cite[Theorem 5.9]{Loos}).
	Finally, (5) follows from~\cite[Proposition III.4.2]{FarautKoranyi2} when $x\in A(e)$, since in this case ${\det}_\C P(x)={\det}_\R (P(x)\vert_{A(e)})$, and by holomorphy in the general case.
\end{proof}

\begin{ex}\label{ex:2}
	As an example, we shall now describe the Jordan triple system corresponding to the irreducible bounded symmetric domains of type $(I_{p,p})$ ($p\Meg 1$). Let $Z$ be the space of complex $p\times p$ matrices, endowed with the triple product defined by
	\[
	\{x,y,z\}=\frac{1}{2}(x y^* z+z y^* x)
	\] 
	for every $x,y,z\in Z$, where $y^*$ denotes the conjugate transpose of $y$. In this case (identifying $Z$ with the space of $\C$-linear mappings from $\C^p$ to $\C^p$), a tripotent is simply a  partial isometry, and a maximal tripotent is a unitary operator. The spectral decomposition of an element of $Z$ is then (essentially) its singular value decomposition, so that the spectral norm is simply the operator norm. In addition, the rank of $Z$ is $p$.
	
	If we let $e$ be the identity of $\C^p$, then $e$ is a maximal tripotent,  and $V_1(e)=Z$, so that the bounded symmetric domain associated with $Z$ is of tube type.
	Notice that an element of $V_{1}(e)$ is invertible if and only if it is invertible in the usual sense (with the same inverse), since the product of $V_{1}(e)$ (that is, $(x,y)\mapsto \frac 1 2 (x y+ y x)$) and the usual product coincide on the algebra generated by (e and) any element of $V_{1}(e)$.  
	Finally, $\det$ on $Z=V_1(e)$ is the usual (complex) determinant.
\end{ex}

\section{The Equality $H^2(\mi_\alpha)=L^2(\mi_\alpha)$ on General Domains}\label{sec:4}

The following result summarizes how one may extend the idea of the proof of the ``only if'' part of Theorem~\ref{teo:4} to the case of general domains of tube type.

\begin{prop}\label{prop:42}
	Assume that $D$ is of tube type and let $e$ be a maximal tripotent; endow $\C^n=V_1(e)$ with the structure of a Jordan algebra  with product $(z,z')\mapsto \{z,e,z'\}$, and let $\det$ be the corresponding determinant polynomial (cf.~Proposition~\ref{prop:23}). Up to a rotation, we may assume that $\overline z=\{e,z,e\}$ for every $z\in \C^n$. 
	Let $R$ be the $\C^n$-valued homogeneous polynomial of degree $r-1$ such that $z^{-1}=R(z)/\det(z)$ for every invertible $z$ (cf.~\cite[Proposition II.2.4]{FarautKoranyi2}).
	
	Assume that there is a basis $L_1,\dots, L_n$ of $(\C^n)'$ and, for every $j=1,\dots, n$, holomorphic polynomials $p_j,q_j$ on $\C^n$, with $q_j$ everywhere non-zero on $D$ and $\Hc^{m-1}$-almost everywhere non-zero on $\phi_1^{-1}(\alpha)$, such that $(L^\sharp_j\circ R)\, q_j= p_j\det $ on $\phi_1^{-1}(\alpha)$. Then, $H^2(\mi_\alpha)=L^2(\mi_\alpha)$.
\end{prop}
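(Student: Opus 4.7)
The plan is to construct, for each $j=1,\dots,n$, a rational function $r_j$ holomorphic on $D$ whose boundary values coincide $\Hc^{m-1}$-a.e.\ on $\phi_1^{-1}(\alpha)$ with $\overline{L_j}$. Once this is done, iterated applications of Lemma~\ref{lem:12} together with the Stone--Weierstrass theorem will yield $H^2(\mi_\alpha)=L^2(\mi_\alpha)$, exactly as at the end of \textsc{Step I} in the proof of Theorem~\ref{teo:4}.

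For the construction I would rely on the fact that the \v Silov boundary of a tube-type domain consists of ``unitary'' elements of the Jordan algebra $V_1(e)=\C^n$: writing a maximal tripotent $\zeta\in \bD$ in the form $\zeta=\sum_{j=1}^r u_j e_j$ with $(e_j)$ a frame of tripotents in $A(e)$ and $|u_j|=1$, one checks $\zeta\cdot \overline{\zeta}=e$ in the Jordan algebra, so that $\zeta$ is invertible with $\zeta^{-1}=\overline{\zeta}$, and moreover $|\det(\zeta)|=\prod_j|u_j|=1$. Combined with $z^{-1}=R(z)/\det(z)$ for invertible $z$, this yields $R(\zeta)=\det(\zeta)\,\overline{\zeta}$ on $\bD$, hence
\[
(L_j^\sharp\circ R)(\zeta)=\det(\zeta)\,L_j^\sharp(\overline{\zeta})=\det(\zeta)\,\overline{L_j(\zeta)}
\]
(using $L_j^\sharp(\overline{z})=\overline{L_j(z)}$). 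The hypothesis $(L_j^\sharp\circ R)\,q_j=p_j\det$ on $\phi_1^{-1}(\alpha)$ then reduces, after cancelling the non-vanishing factor $\det(\zeta)$, to $\overline{L_j(\zeta)}\,q_j(\zeta)=p_j(\zeta)$ there. Setting $r_j\coloneqq p_j/q_j$ thus gives a function holomorphic on $D$ (since $q_j$ is non-vanishing there) with $r_j(\zeta)=\overline{L_j(\zeta)}$ at $\Hc^{m-1}$-a.e.\ $\zeta\in \phi_1^{-1}(\alpha)$.

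To show $r_j\in H^2(\mi_\alpha)$, I would approximate by
\[
r_{j,\rho}(z)\coloneqq \frac{p_j(z)}{q_j(\rho z)},\qquad \rho\in (0,1),
\]
each of which is holomorphic in a neighbourhood of $\Cl(D)$ (since $q_j$ is non-vanishing on $D\supseteq \rho\Cl(D)$) and therefore lies in $H^2(\mi_\alpha)$, by the same argument used in the proof of Theorem~\ref{teo:4}. Pointwise convergence $r_{j,\rho}\to r_j$ is immediate at $\Hc^{m-1}$-a.e.\ point of $\phi_1^{-1}(\alpha)$. The main obstacle is the uniform bound needed for dominated convergence: I would fix $\eps\in (0,1)$ and apply Lemma~\ref{lem:13} to the one-variable polynomial $w\mapsto q_j(w\zeta)$, whose degree is at most $\deg q_j$. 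For every $w\in C=\conv(B_\C(0,\eps)\cup\{1\})\setminus\{1\}$ one has $|w|<1$, hence $w\zeta\in D$ and $q_j(w\zeta)\neq 0$; at $w=1$ we restrict to those $\zeta$ for which $q_j(\zeta)\neq 0$. Lemma~\ref{lem:13} then yields
\[
|q_j(\rho\zeta)|\Meg (\eps/2)^{\deg q_j}|q_j(\zeta)|\qquad (\rho\in [0,1]),
\]
whence $|r_{j,\rho}(\zeta)|\meg (2/\eps)^{\deg q_j}|L_j(\zeta)|$, a dominating function bounded on the compact set $\bD$. Dominated convergence then gives $r_{j,\rho}\to r_j$ in $L^2(\mi_\alpha)$.

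Finally, since $L_j$ and $r_j$ lie in $H^2(\mi_\alpha)$ and are bounded on $\supp \mi_\alpha$, iterated applications of Lemma~\ref{lem:12} place every polynomial in $L_1,\dots,L_n,r_1,\dots,r_n$ in $H^2(\mi_\alpha)$. Modulo $\mi_\alpha$-null sets these coincide with polynomials in $L_1,\dots,L_n,\overline{L_1},\dots,\overline{L_n}$, which, since $(L_j)$ is a basis of $(\C^n)'$, separate points of $\bD$ and are closed under conjugation. Stone--Weierstrass then yields $C(\bD)\subseteq H^2(\mi_\alpha)$, and density of $C(\bD)$ in $L^2(\mi_\alpha)$ completes the argument.
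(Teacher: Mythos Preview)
Your proof is correct and follows essentially the same route as the paper's: establish $\overline{L_j}=p_j/q_j$ $\mi_\alpha$-a.e.\ via the identity $\zeta^{-1}=\overline{\zeta}$ on $\bD$ (which the paper simply cites from~\cite[Proposition 5.5]{Loos}), then approximate by $p_j/q_j(\rho\,\cdot\,)$, use Lemma~\ref{lem:13} for the dominated-convergence bound, and conclude with Lemma~\ref{lem:12} and Stone--Weierstrass. Your treatment is in fact slightly more careful than the paper's in making the $\eps$ from Lemma~\ref{lem:13} explicit.
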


\begin{proof}
	Observe that $\overline \zeta=\zeta^{-1}$ for every $\zeta\in \bD$, thanks to~\cite[Proposition 5.5]{Loos}, so that
	\[
	\overline{L_j(\zeta)}=L_j^\sharp(\overline \zeta)=L_j^\sharp (\zeta^{-1})=\frac{L_j^\sharp(R(\zeta))}{\det(\zeta)} =\frac{p_j(\zeta)}{q_j(\zeta)}
	\]
	for every $\zeta\in\phi_1^{-1}(\alpha)$ such that $q_j(\zeta)\neq0$, hence for $\mi_\alpha$-almost every $\zeta$ by Theorem~\ref{teo:3}. 
	Now, it is clear that $\frac{p_j}{q_j(\rho\,\cdot\,)}\in H^2(\mi_\alpha)$ for every $\rho\to 1^-$ (cf., e.g.,~\cite[Remark 2.17]{BoundedClark}), and that $\frac{p_j}{q_j(\rho\,\cdot\,)}\to \frac{p_j}{q_j}=\overline{L_j}$  $\mi_\alpha$-almost everywhere. 
	In addition, by Lemma~\ref{lem:13} $\abs*{\frac{p_j}{q_j(\rho\,\cdot\,)}}\meg 2^{k_j}\abs*{\frac{p_j}{q_j}}=2^{k_j}\max_\bD \abs{L_j}$ $\mi_\alpha$-almost everywhere, where $k_j$ is the degree of $q_j$. 
	Then, the dominated convergence theorem shows that $\frac{p_j}{q_j(\rho\,\cdot\,)}\to  \overline{L_j}$ in $L^2(\mi_\alpha)$, so that $\overline{L_j}\in H^2(\mi_\alpha)$. By the arbitrariness of $j=1,\dots, n$,  using Lemma~\ref{lem:12} we then see that $H^2(\mi_\alpha)$ contains every polynomial (holomorphic or not), hence also $C(\bD)$ by the Stone--Weierstrass theorem, hence $L^2(\mi_\alpha)$ by density.
\end{proof}

\begin{cor}\label{cor:1}
	Assume that $D$ is of tube type and let $e$ be a maximal tripotent; endow $\C^n=V_1(e)$ with the structure of a Jordan algebra  with product $(z,z')\mapsto \{z,e,z'\}$, and let $\det$ be the corresponding determinant polynomial (cf.~Proposition~\ref{prop:23}). If $\phi=\frac{p \det }{q}$ for some coprime holomorphic polynomials $p,q$ with $q(z)\neq 0$ for every $z\in D$, then $H^2(\mi_\alpha)=L^2(\mi_\alpha)$.
\end{cor}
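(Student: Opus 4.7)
The plan is to apply Proposition~\ref{prop:42} with a suitable choice of the auxiliary data. I would take $L_1,\dots,L_n$ to be the coordinate functionals (any basis of $(\C^n)'$ works) and set
$$q_j\coloneqq q,\qquad p_j\coloneqq \overline{\alpha}\,p\,(L_j^\sharp\circ R)$$
for each $j=1,\dots,n$. Both are holomorphic polynomials, and $q_j=q$ is nowhere zero on $D$ by hypothesis. The identity $(L_j^\sharp\circ R)\,q_j=p_j\det$ required by Proposition~\ref{prop:42} then holds on $\phi_1^{-1}(\alpha)$ by a one-line computation using $p\det=\alpha q$ on that set: $p_j\det=\overline{\alpha}(p\det)L_j^\sharp(R)=q\,L_j^\sharp(R)$. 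The motivation for this choice is the identity $\overline{L_j(\zeta)}=L_j^\sharp(R(\zeta))/\det(\zeta)$ on $\bD$, into which one substitutes $\det=\alpha q/p$, valid on $\phi_1^{-1}(\alpha)\cap\{p\neq 0\}$.

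What remains is to verify that $q$ vanishes on an $\Hc^{m-1}$-null subset of $\phi_1^{-1}(\alpha)$. For this I would pass to the coprime representation $\phi=\tilde p/\tilde q$: write $q=d\tilde q$ for some polynomial $d$ which, because $\gcd(p,q)=1$, must divide $\det$. Theorem~\ref{teo:3}(1) applied to the coprime pair $(\tilde p,\tilde q)$ yields $\dim_\R(\{\tilde q=0\}\cap\bD)\meg m-2$, so the problem reduces to showing that $d$, and \emph{a fortiori} $\det$ itself, has no zeros on $\bD$.

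This last point is where the tube-type hypothesis is exploited. Every $\zeta\in\bD$ is a maximal tripotent, and the tripotent equation $\{\zeta,\zeta,\zeta\}=\zeta$ forces the (positive) spectral coefficients in Proposition~\ref{prop:20}(4) to satisfy $\lambda_j^3=\lambda_j$, hence $\lambda_j=1$; therefore $\zeta=\sum_{j=1}^r e_j$ in some frame of primitive tripotents, and Proposition~\ref{prop:23}(3) (applied factorwise in the non-simple case) gives $\det(\zeta)=1$. Consequently $\{q=0\}\cap\bD=\{\tilde q=0\}\cap\bD$ has real dimension at most $m-2$, whence $\Hc^{m-1}$-measure zero. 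The hypotheses of Proposition~\ref{prop:42} are thus met, and the conclusion $H^2(\mi_\alpha)=L^2(\mi_\alpha)$ follows.

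The main obstacle is the careful bookkeeping needed to reconcile the given representation $\phi=p\det/q$ (which need not be coprime) with the coprime form demanded by Theorem~\ref{teo:3}; this is mitigated entirely by the observation that $\det\equiv 1$ on $\bD$, after which the proof collapses to one application of Proposition~\ref{prop:42}.
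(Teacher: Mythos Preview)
Your approach is the same as the paper's: apply Proposition~\ref{prop:42} with $q_j$ a scalar multiple of $q$ and $p_j$ a scalar multiple of $(L_j^\sharp\circ R)\,p$, and then check that $q$ is $\Hc^{m-1}$-a.e.\ nonzero on $\phi_1^{-1}(\alpha)$. The identity you write down is exactly the one-line computation the paper gives.

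There is, however, a genuine slip in your verification of the side condition. The claim $\det(\zeta)=1$ for every $\zeta\in\bD$ is false: already for $D=\Db$ one has $\det(z)=z$, and for $D$ the matrix domain $\det$ is the usual determinant, which is unimodular but not identically $1$ on $U(p)$. The problem is that Proposition~\ref{prop:23}(3) requires the frame to lie in the \emph{real form} $A(e)$, whereas the frame produced by decomposing an arbitrary maximal tripotent $\zeta$ lies in $V_1(e)=\C^n$ but need not lie in $A(e)$. The correct statement, via Proposition~\ref{prop:23}(4) applied factorwise, is $\abs{\det}\equiv 1$ on $\bD$; this is all you need, so the argument survives with this correction.

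The paper avoids this detour altogether. Since $\det$ is homogeneous, each of its irreducible factors vanishes at $0\in D$; as $q$ is nowhere zero on $D$, no such factor can divide $q$. Combined with $\gcd(p,q)=1$ this gives $\gcd(p\det,q)=1$ directly, so $p\det/q$ is already the coprime representation and Theorem~\ref{teo:3}(1) yields $\dim_\R(\{q=0\}\cap\bD)\meg m-2$ with no further bookkeeping.
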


Notice that, since $q$ vanishes nowhere on $D$ and all the irreducible factors of $\det$ vanish at $0$ (by homogeneity), saying that $p$ and $q$ are coprime is the same as saying that $p\det$ and $q$ are coprime. In particular, the zero set of $q$ in $\bD$ has (real) dimension $\meg m-2$ by Theorem~\ref{teo:3}.

\begin{proof}
	It suffices to observe that, if $L\in (\C^n)'$, then clearly
	\[
	(L^\sharp\circ R) (-\alpha q)= (L^\sharp\circ R) p \det 
	\]
	on  $\phi_1^{-1}(\alpha)$, so that the conclusion follows from Proposition~\ref{prop:42}.
\end{proof}

We shall present below (cf.~Proposition~\ref{prop:40}) some sufficient conditions for  the inequality $H^2(\mi_\alpha)\neq L^2(\mi_\alpha)$   which allow to show that this inequality may happen even if $D$ is an irreducible tube domain (cf.~Example~\ref{ex:1}).
We shall  prepare the statement of Proposition~\ref{prop:40} with some remarks on the manifolds of tripotents.

Let $M$ be the set of tripotents, so that $M$ is a compact (real) analytic submanifold of $\C^n$   (cf.~\cite[Theorem 5.6]{Loos}). Observe that, for every $j=1,\dots, r$, the set $M_{j}$ of tripotents of rank $j$ is a (real) algebraic set defined by the polynomials $z \mapsto \{z,z,z\}-z$ and $z \mapsto \abs{z}^2-j$; since $M_{j}$ is open an compact in $M$ (because the relations $\abs{z}^2=j$ and $j-1<\abs{z}^2<j+1$ are equivalent on $M$), it is clear that $M_{j}$ is a union of connected components of $M$, hence a (real) analytic submanifold of $\C^n$. 
Observe that $M_{j}$ is connected if $D$ is irreducible, since $K$ acts transitively on $M_{j}$ in this case (cf.~\cite[Corollary 5.12]{Loos}).  If, otherwise, $D$ is reducible and $D_1,\dots, D_\ell$ are its irreducible factors (so that $D=\prod_{k=1}^\ell D_k$ up to a rotation), then $M_j=M_j(D)$ is the disjoint union of the compact connected sets $\sum_{k=1}^\ell M_{j_k}(D_k)$, with $j_1+\cdots+j_\ell=j$, and $K=K(D)=\prod_{j=1}^\ell K(D_j)$ acts transitively on each of these sets (cf.~\cite[Theorem 5.9]{Loos}). Then, these sets are the connected components of $M_j$. We have therefore proved that $K$ acts transitively on every connected component of $M_j$.

Now, tale $e\in M_{r-1}$, and observe that $V_0(e)$ is a  positive Hermitian Jordan triple system of rank $1$ (cf.~Proposition~\ref{prop:21}), hence  one-dimensional (cf.~\cite[4.14--18]{Loos}). 
In addition, the symmetric domain defined by $V_0(e)$, that is, $V_0(e)\cap D$ (cf.~\cite[Theorem 6.3]{Loos}) is the disc of centre $0$ and radius $\abs{e}=\sqrt{\rk(e)}=1$ in $V_0(e)$.
 Observe, in addition, that the boundary component  associated with $e$ is $e+V_0(e)\cap D$, and that its relative boundary (that is, its boundary in $e+ V_0(e)$) is contained in $\bD$ (cf.~\cite[Theorem 6.3]{Loos}).

Next, consider the set $F\coloneqq \Set{(e,z)\in \C^n\times \C^n\colon e\in M_{r-1}, z\in e+ V_0(e)\cap D} $, and observe that $F$ is a (real) algebraic set defined by the polynomials $(e,z)\mapsto \{e,e,e\}-e $, $(e,z)\mapsto \abs{e}^2-(r-1)$, $(e,z)\mapsto (I-B(e,e))(z-e)$, and $(e,z)\mapsto \abs{z-e}^2-1$ (recall that $B(e,e)=I-2 D(e,e)+Q(e)^2$ is the orthogonal projector of $\C^n$ onto $V_0(e)$ by Proposition~\ref{prop:21}). 
In addition, arguing as before  it is readily seen that $K$ acts transitively on each connected component of $F$ (the action being $k\cdot (e,z)=(k e, k z)$), so that $F$ is also an analytic submanifold of $\C^n\times \C^n$. 
In addition, the mapping $\pr_2\colon F\ni (e,z)\mapsto z\in \bD$ is onto and a submersion. Indeed, the first fact follows from the previous discussion, since every maximal tripotent may be split as a sum of some $e'\in M_{r-1}$ and a primitive tripotent (and therefore belongs to the relative boundary of the boundary component passing through $e'$). 
This implies that $\pr_2$ induces a submersion on a non-empty Zariski open subset of $F$, hence everywhere since $K$ acts transitively on each connected component of $F$.

\begin{deff}\label{def:10}
	We say that a (real) algebraic hypersurface $V$ in $\bD$ contains a bundle of tori if the (real) dimension of the closed semi-algebraic set $\pr_2(\pr_1^{-1}(W)\cap F)$ is $m-1$, where $W$ denotes the closed semi-algebraic set of $e\in M_{r-1}$ such that $\pr_1^{-1}(e)\cap F\subseteq \pr_2^{-1}(V)\cap F$ (cf.~\cite[Corollary 1.18 and Theorem 1.1]{Coste} for the semi-algebraicity of $W$ and $\pr_2(\pr_1^{-1}(W)\cap F)$).
\end{deff}

In other words, $V$ contains a bundle of tori if it contains a full-dimensional set which is covered by relative boundaries of the boundary components of $D$ of (complex) dimension $1$.

\begin{prop}\label{prop:40}
	Take $\alpha\in \T$ and assume that $\Cl(\phi_1^{-1}(\alpha))$ contains a bundle of tori. Then, $L^2(\mi_\alpha)\neq H^2(\mi_\alpha)$.
\end{prop}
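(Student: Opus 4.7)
The plan is to argue by contradiction, extending Step~II of the proof of Theorem~\ref{teo:4} to the geometry of the bundle of tori. Assuming $H^2(\mi_\alpha)=L^2(\mi_\alpha)$, I would first pick $e^*\in W$ and a unit tripotent $e_0^*\in V_0(e^*)$, then choose a linear functional $L$ on $\C^n$ with $L(e_0^*)\neq 0$, so that for every $e$ close to $e^*$ the restriction of $\overline L$ to the torus $T_e=e+\T e_0(e)$ has a genuinely antiholomorphic component. Since $\overline L\in L^\infty(\mi_\alpha)\subseteq L^2(\mi_\alpha)$, my assumption supplies holomorphic polynomials $p_k\to\overline L$ in $L^2(\mi_\alpha)$, and my goal is to transfer this convergence to a single well-chosen torus $T_e$ in such a way that $\overline L\vert_{T_e}$ would be forced into the closed subspace of $L^{s_2}(\T)$ consisting of boundary values of $H^{s_2}(\Db)$-functions, which is impossible.

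To transfer the convergence to a torus I would exploit the semi-algebraic submersion structure of $\pr_2\colon\pr_1^{-1}(W)\cap F\to U$, where $U\coloneqq\pr_2(\pr_1^{-1}(W)\cap F)\subseteq \Cl(\phi_1^{-1}(\alpha))$ has dimension $m-1$; since each fibre of $\pr_1\vert_F$ has real dimension $1$, necessarily $\dim W\geq m-2$. Near a generic $e^*$ in the relative interior of $W$, I would select an $(m-2)$-dimensional real-analytic slice $O\subseteq W$ together with a real-analytic local section $e\mapsto e_0(e)$ of unit tripotents in $V_0(e)$, so that the map
\[
\Phi\colon O\times \T \to \bD,\qquad \Phi(e,w)=e+w\,e_0(e),
\]
is an immersion onto an open subset of $U$. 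Combining the density formula of Theorem~\ref{teo:3} with the area formula for $\Phi$, the convergence $p_k\to\overline L$ in $L^2(\mi_\alpha)$ translates into
\[
\int_{O\times \T}\bigl|p_k-\overline L\bigr|^{2}\!\circ\Phi\cdot h(e,w)\,dw\,d\sigma(e)\to 0,
\]
where $h$ is proportional to $J(e,w)/(\abs{\nabla\phi(\Phi(e,w))}\cdot N(\Phi(e,w)))$, with $J$ the Jacobian of $\Phi$, $N$ the (bounded) multiplicity of $\Phi$, and $\sigma$ a natural measure on $O$. Fubini and a subsequence argument then yield some $e\in O$ on whose fibre $\{e\}\times \T$ the integral tends to zero, subject to the generic conditions $L(e_0(e))\neq 0$, $J(e,\cdot)\not\equiv 0$ on $\T$, and $q(\Phi(e,\cdot))\not\equiv 0$ on $\T$.

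For such a generic $e$ the weight $h(e,\cdot)$ is real-analytic and strictly positive on $\T$ except at finitely many points, near each of which it behaves like $\abs{w-w_0}^k$ for some integer $k$; hence $h(e,\cdot)^{-r}\in L^{1}(\T)$ for $r>0$ sufficiently small. Setting $s_2=2r/(1+r)$ and applying Hölder's inequality with exponents $2/s_2$ and $2/(2-s_2)$ exactly as in Step~II of Theorem~\ref{teo:4} then gives $p_k\circ\Phi(e,\cdot)\to\overline L\circ\Phi(e,\cdot)$ in $L^{s_2}(\T)$. But $w\mapsto p_k(\Phi(e,w))=p_k(e+we_0(e))$ is a holomorphic polynomial in $w$, so its $L^{s_2}$-limit must belong to the boundary value space of $H^{s_2}(\Db)$, whereas
\[
\overline L\circ\Phi(e,w)=\overline{L(e)}+\overline{L(e_0(e))}\,\bar w
\]
has nonzero Fourier coefficient at frequency $-1$ and hence does not belong to that space, a contradiction.

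The main obstacle I anticipate is the geometric bookkeeping in the middle paragraph: setting up the local parametrization $\Phi$, deriving the precise change-of-variables formula together with its Jacobian and multiplicity factors, and verifying that each of the ``generic'' conditions on $e$ cuts out only a subset of $O$ of positive codimension. Once this semi-algebraic/real-analytic bookkeeping is in place, the remainder of the argument is a direct adaptation of the polydisc case.
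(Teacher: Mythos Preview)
Your proposal is correct and follows essentially the same route as the paper's proof: choose an $(m-2)$-dimensional analytic slice $O\subseteq W$ (the paper calls it $M$), parametrize the resulting $(m-1)$-dimensional family of tori by a map into $\bD$ (your $\Phi$, the paper's $f=\pr_2\vert_{M'}$), combine Theorem~\ref{teo:3} with the area and coarea formulas to push the $L^2(\mi_\alpha)$ convergence down to a weighted $L^2$ over $O\times\T$, extract via Fubini a single generic torus on which the weighted convergence survives, and then use integrability of small negative powers of the weight together with H\"older to obtain $L^s$-convergence on that torus, contradicting the closedness of $H^s(\Db)$ boundary values. The only substantive difference is presentational: the paper works abstractly with $M'=\pr_1^{-1}(M)\cap F$ and handles non-injectivity by an inequality rather than a multiplicity factor, and it separates the two sources of degeneracy in the weight (vanishing of $\det f'$ versus growth of $\abs{\nabla\phi}$) rather than bundling them into a single $h$; your explicit formula $\overline L\circ\Phi(e,w)=\overline{L(e)}+\overline{L(e_0(e))}\,\bar w$ makes the final contradiction slightly more transparent than the paper's appeal to ``$\overline L\notin H^s$.''

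Your self-identified obstacle---the geometric bookkeeping ensuring that $\Phi$ really is an immersion and that the various genericity conditions on $e$ each cut out sets of positive codimension---is exactly where the paper invests its effort (choosing a nonsingular $(e_0,z_0)$, then the subspace $X\subseteq T_{e_0}W$ so that $\pr_2$ has full rank on $\pr_1^{-1}(X)\cap T_{(e_0,z_0)}F$), so your assessment of where the work lies is accurate.
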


When $D$ is a polydisc, this amounts to the `only if' part of Theorem~\ref{teo:4}.

\begin{proof}
	Set $V\coloneqq\Cl(\phi_1^{-1}(\alpha))$, and let $W$ be the closed semi-algebraic set of $e\in M_{r-1}$ such that $\pr_1^{-1}(e)\cap F\subseteq \pr_2^{-1}(V)\cap F$. Since the semi-algebraic set $\pr_2(\pr_1^{-1}(W)\cap F)$ has dimension $m-1$, we may find a non-singular element $(e_0,z_0)$ of $\pr_1^{-1}(W)\cap F$ such that also $z_0$ is non-singular in $V$;  we may also assume that $\pr_2\colon W\to V$ is a submersion at $(e_0,z_0)$.
	Then, we may find an $(m-2)$-dimensional vector subspace $X$ of the tangent space $T_{e_0} W$ of $W$ at $e_0$ such that $\pr_2$ maps $\pr_1^{-1}(X)\cap T_{(e_0,z_0)} F$ onto $T_{z_0} V$. 
	Hence, we may find a connected $(m-2)$-dimensional analytic submanifold $M$ of $W$ with tangent space $X$ at $e_0$, so that $M'\coloneqq\pr_1^{-1}(M)\cap F$ is an $(m-1)$-dimensional analytic submanifold of $F$.
	By construction, the restriction $f$ of $\pr_2$ to $M'$ has rank $m-1$ at $(e_0,\zeta_0)$, hence in a neighbourhood of $(e_0,\zeta_0)$. 
	
	Let $L$ be a $\C$-linear functional on $\C^n$ which does not vanish identically on  $\C(\zeta_0-e_0)=V_0(e_0) $. 
	Up to shrinking $M'$ if necessary, we may then assume that $L$ does not vanish identically on $\C(\zeta-e)$ for every $(e,\zeta)\in M'$. Notice that, if $(e,\zeta)\in M'$, then the torus $e+\T(\zeta-e)$ is $\pr_2(\pr_1^{-1}(e)\cap F)$, hence is contained in $V$ since $M\subseteq W$.
	Therefore, $L$ induces a non-constant holomorphic polynomial of the boundary component $e+ \Db(\zeta-e)$ of $D$, so that $\overline L \not \in H^s(e+\T(\zeta-e))$ for every $s>0$, where $H^s(e+\T(\zeta-e))$ denotes the boundary values space of the Hardy space $H^s(e+\Db(\zeta-e))$.
	
	Assume by contradiction that $\overline L  \in H^2(\mi_\alpha)$. Then, there is a sequence $(p_k)$ of holomorphic polynomials on $\C^n$ which converges to $\overline L$ in $L^2(\mi_\alpha)$. In particular, by the area formula,
	\[
	\int_{f(M')} \frac{\abs{p_k(\zeta)-\overline{L(\zeta)}  }^2 }{ \abs{\nabla\phi(\zeta)}}\,\dd \Hc^{m-1}(\zeta)\Meg \int_{M'} \frac{\abs{p_k(\zeta)-\overline{L(\zeta)}}^2 }{ \abs{\nabla\phi(\zeta)}} \abs{\det f'(e,\zeta)}\,\dd \Hc^{m-1}(e,\zeta)
	\]
	for every $k\in\N$, where the $\Meg $ accounts for the possible non-injectivity of $f$.
	Applying the coarea formula associated with the mapping $\pr_1\colon M'\to M$ (whose coarea factors are $1$ since $\pr_1$ induces a surjective quasi-isometry between the tangent spaces), we then infer that
	\[
	\int_{f(M')} \frac{\abs{p_k(\zeta)-\overline{L(\zeta)}  }^2 }{ \abs{\nabla\phi(\zeta)}}\,\dd \Hc^{m-1}(\zeta)\Meg  \int_M \int_{M'\cap \pr_1^{-1}(e)} \frac{\abs{p_k(\zeta)-\overline{L(\zeta)}  }^2 }{ \abs{\nabla\phi(\zeta)}} \abs{\det f'(e,\zeta)}\,\dd \Hc^{1}(e,\zeta)\,\dd \Hc^{m-2}(e).
	\]
	Consequently, up to a subsequence,  $\int_{M'\cap \pr_1^{-1}(e)} \frac{\abs{p_k(\zeta)-\overline{L(\zeta)}  }^2 }{ \abs{\nabla\phi(\zeta)}} \abs{\det f'(e,\zeta)}\,\dd \Hc^{1}(e,\zeta)\to 0$ for $\Hc^{m-2}$-almost every $e\in M$. 
	Now observe that, since $f$ is analytic and $\det f(\zeta_0,e_0)\neq 0$, for $\Hc^{m-2}$-almost every $e\in M$ the analytic function $\det f'(e,\,\cdot\,)$ does not vanish identically on the torus $ M'\cap \pr_1^{-1}(e)$. Hence,  it may vanish only at a finite number of points, so that $\abs{\det f'(e,\,\cdot\,)}^{-s}$ is integrable for sufficiently small $s>0$. Analogously, $\nabla\phi$ induces a somewhere defined rational function on the torus $M'\cap \pr_1^{-1}(e)$ for $\Hc^{m-2}$-almost every $e\in M$ (since $\nabla \phi$ is defined in $\bD\setminus N$ for some algebraic set $N$ of dimension $\meg m-2$ by Theorem~\ref{teo:3}), so that $\abs{\nabla \phi}^s$ is integrable on $M'\cap \pr_1^{-1}(e)$ for sufficiently small $s>0$ (and for $\Hc^{m-2}$-almost every $e\in M$). We may then take $e\in M$ and $s>0$ such that $p_k\to \overline L$ in $L^s(M'\cap \pr_1^{-1}(e),\Hc^1)$. Since the $p_k$ induce elements of $H^s(M'\cap \pr_1^{-1}(e))$ and since this latter space is closed in $L^s(M'\cap \pr_1^{-1}(e),\Hc^1)$ and does not contain $\overline L$ by the previous remarks, this leads to a contradiction.	 Therefore, $\overline L\not \in H^2(\mi_\alpha)$, whence the result.
\end{proof}

Here is a simple corollary of Proposition~\ref{prop:40}, which may be also proved using the same strategy as in the proof of~\cite[Proposition 5.6 (4)]{BoundedClark}.

\begin{cor}
	Assume that  $\C^n$ may be split as the direct sum of two Jordan triple systems $V_1$ and $V_2$, with $\partial_v \phi=0$ for every $v\in V_1$. Then  $H^2(\mi_\alpha)\neq L^2(\mi_\alpha)$ for every $\alpha\in \T$.
\end{cor}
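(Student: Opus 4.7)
The plan is to deduce the corollary from Proposition~\ref{prop:40} by showing that $\Cl(\phi_1^{-1}(\alpha))$ contains a bundle of tori. The first step is structural: the splitting $\C^n=V_1\oplus V_2$ induces a product decomposition $D=D_1\times D_2$ of symmetric domains and $\bD=\bD_1+\bD_2$ of \v Silov boundaries (cf.~\cite[Theorems~5.9 and~6.3]{Loos}), and the hypothesis $\partial_v\phi=0$ for $v\in V_1$ lets me factor $\phi=\psi\circ\pr_{V_2}$ for some rational function $\psi\colon D_2\to\Db$. Fubini on $\bD_1+\bD_2$ forces the boundary values of $\psi$ to be unimodular $\beta_2$-almost everywhere, so $\psi$ is rational inner on $D_2$. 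Setting $m_j\coloneqq\dim\bD_j$ and $r_j\coloneqq\rk(V_j)$, Theorem~\ref{teo:3} applied to $\psi$ shows that $\Cl(\psi_1^{-1}(\alpha))$ has real dimension $m_2-1$ provided $\psi$ is non-constant; the constant case renders $\mi_\alpha$ a scalar multiple of $\beta$, and the inequality $H^2(\mi_\alpha)\neq L^2(\mi_\alpha)$ follows trivially then. Since $\phi$ factors through $\pr_{V_2}$, one has $\Cl(\phi_1^{-1}(\alpha))=\bD_1+\Cl(\psi_1^{-1}(\alpha))$, an $(m-1)$-dimensional set of the desired product shape.

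Next, I would populate the set $W$ of Definition~\ref{def:10}. The natural candidates are tripotents of the form $e=e_1+\zeta_2$ with $e_1\in M_{r_1-1}(D_1)$ and $\zeta_2\in\Cl(\psi_1^{-1}(\alpha))$ a maximal tripotent of $V_2$. Orthogonality of the decomposition gives $\rk(e)=r_1-1+r_2=r-1$ and $V_0(e)=V_0(e_1)\oplus V_0(\zeta_2)=V_0(e_1)\subseteq V_1$. Hence the torus $e+\T v_0$ attached to a primitive tripotent $v_0\in V_0(e_1)$ is a $V_1$-translate of $\zeta_2$, and since $\phi$ is constant along $V_1$, $\phi_1\equiv\psi_1(\zeta_2)=\alpha$ on the entire torus; therefore $e\in W$.

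Finally, I would run the dimension count. Every maximal tripotent $\zeta_1\in\bD_1$ admits a splitting $\zeta_1=e_1+v_0$ with $e_1\in M_{r_1-1}(D_1)$ and $v_0$ primitive in $V_0(e_1)$ (extend a partial frame to a full one). Therefore $\pr_2(\pr_1^{-1}(W)\cap F)$ contains the full Minkowski sum $\bD_1+\Cl(\psi_1^{-1}(\alpha))$, whose dimension is $m_1+(m_2-1)=m-1$ by Step~1. Thus $\Cl(\phi_1^{-1}(\alpha))$ contains a bundle of tori, and Proposition~\ref{prop:40} yields $H^2(\mi_\alpha)\neq L^2(\mi_\alpha)$.

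The main obstacle will be organizing the Jordan-theoretic bookkeeping under the splitting — checking that ranks, Peirce spaces, and the tripotent manifolds $M_j$ respect the direct sum, and citing the frame-conjugacy results that make the splitting $\zeta_1=e_1+v_0$ available. None of these verifications is individually deep; the analytic weight of the argument rests entirely on the previously established Proposition~\ref{prop:40} and Theorem~\ref{teo:3}.
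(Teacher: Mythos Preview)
Your proposal is correct and follows essentially the same route as the paper's proof: factor $\phi=\psi\circ\pr_{V_2}$ with $\psi$ rational inner on $D_2$, observe that for $e=e_1+\zeta_2$ with $e_1\in M_{r_1-1}(D_1)$ and $\zeta_2\in\psi_1^{-1}(\alpha)$ the associated torus lies in $\Cl(\phi_1^{-1}(\alpha))$, and then invoke Proposition~\ref{prop:40}. The paper compresses this into three sentences, while you spell out the Peirce and rank bookkeeping and the dimension count explicitly; your treatment of the constant case is harmless but unnecessary, since a rational inner $\psi$ on $D_2$ (with $V_2\neq\{0\}$) cannot be constant.
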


By~\cite[Theorem 3.3]{KoranyiVagi}, the assumptions are met whenever $D$ is not of tube type.

\begin{proof}
	By assumption, $\phi=\psi\circ \pr$, where $\pr\colon \C^n\to V_2$ is the orthogonal projection and $\psi$ is some rational inner function on the symmetric domain associated with $V_2$.
	If $B$ is the relative boundary of a boundary component (of complex dimension $1$) of the symmetric domain associated with $V_1$, then $B+ \zeta_2$ is the relative boundary of a boundary component (of complex dimension $1$) of $D$ and is contained in $\phi_1^{-1}(\alpha)$ for every  $\zeta_2\in \psi_1^{-1}(\alpha)$. Since the union of these sets covers $\phi_1^{-1}(\alpha)$,  the result follows from Proposition~\ref{prop:40}.
\end{proof}

We now provide as example showing that Proposition~\ref{prop:40} is not vacuous also when $D$  is irreducible.

\begin{ex}\label{ex:1}
	Assume that $D$ is the domain of $2\times 2$ complex matrices with (operator) norm $<1$, so that $D$ is irreducible and of tube type, and $\bD=U(2)$ (cf.~Example~\ref{ex:2}). Consider the rational   function defined by
	\[
	\phi\left( \begin{matrix}
		a &b \\ c & d
	\end{matrix} \right)= \frac{ad-bc-d}{1-a}.
	\]
	Observe that $\phi$ is well defined on $D$, since $\abs{a}<1$ for every $\left( \begin{smallmatrix}
		a &b \\ c & d
	\end{smallmatrix} \right)\in D$, so that $q\left( \begin{smallmatrix}
		a &b \\ c & d
	\end{smallmatrix} \right)=1-a\neq 0$. In addition, if $\left( \begin{smallmatrix}
	a &b \\ c & d
	\end{smallmatrix} \right)\in  U(2)$, then there are $\gamma\in \T$ and $(a',b')\in \partial B_{\C^2}((0,0),1)$ such that $a=\gamma a'$, $b=\gamma b'$, $c=-\gamma\overline{b'}$, and $d=\gamma \overline{a'}$, so that $\phi\left( \begin{smallmatrix}
	a &b \\ c & d
	\end{smallmatrix} \right)= \frac{ \gamma^2  - \gamma \overline{a'}  }{1- \gamma a'}\in \T $  (unless $a'\gamma=1$, in which case $\left( \begin{smallmatrix}
	a &b \\ c & d
	\end{smallmatrix} \right)=\left( \begin{smallmatrix}
	1 &0 \\ 0 & \gamma^2
	\end{smallmatrix} \right)$). 
	Applying Lemma~\ref{lem:13} as in the proof of Theorem~\ref{teo:3} and using the fact that $U(2)$ is the \v Silov boundary of $D$, we then see that $\abs{\phi}\meg 2$ on $D$. Since $\phi$ attains unimodular values on $\T$ (except on a $1$-dimensional set), we may then conclude that $\phi$ is inner.

	Let us now prove that $H^2(\mi_\alpha)\neq L^2(\mi_\alpha)$ for every $\alpha\in \T$. 
	To this aim, observe that for every $(x_1,x_2), (y_1,y_2)\in \partial B_{\C^2}((0,0),1)$ the linear mapping $L_{(x_1,x_2),(y_1,y_2)}$ such that $L_{(x_1,x_2),(y_1,y_2)}(x_1,x_2)=(y_1,y_2)$ and $L_{(x_1,x_2),(y_1,y_2)}(-\overline{x_2},\overline{x_1})=0$ is a tripotent of rank $1$. 
	Notice that this parametrization of tripotents of rank $1$ is redundant, since if we multiply both $(x_1,x_2)$ and $(y_1,y_2)$ by the same element of $\T$ we get the same linear mapping. 
	The relative boundary of the boundary component of $D$ passing through $L_{(x_1,x_2),(y_1,y_2)}$ is formed by the linear mappings $L_{(x_1,x_2),(y_1,y_2),\gamma}$, $\gamma\in \T$, such that $L_{(x_1,x_2),(y_1,y_2),\gamma}(x_1,x_2)=(y_1,y_2)$ and $L_{(x_1,x_2),(y_1,y_2),\gamma}(-\overline{x_2},\overline{x_1})=\gamma(-\overline{y_2},\overline{y_1})$, whose matrix is
	\[
	\left(\begin{matrix}
		y_1 & -\gamma \overline{y_2} \\ y_2 & \gamma \overline{y_1}
	\end{matrix}  \right) \left(\begin{matrix}
		\overline{x_1} & \overline{x_2}\\ -x_2 & x_1 
	\end{matrix}\right)=\left(\begin{matrix}
		\overline{x_1} y_1+\gamma x_2 \overline{y_2} & \overline{x_2} y_1- \gamma x_1 \overline{y_2}\\
		\overline{x_1} y_2 - \gamma x_2\overline{y_1} & \overline{x_2} y_2+\gamma x_1 \overline{y_1}
	\end{matrix}\right),
	\]
	and whose determinant is $\gamma$. Thus, $L_{(x_1,x_2),(y_1,y_2),\gamma}\in V_\alpha$, with the notation of Theorem~\ref{teo:3},  if and only if 
	\[
	\gamma-(\overline{x_2} y_2+\gamma x_1 \overline{y_1})=\alpha(1-  \overline{x_1} y_1-\gamma x_2 \overline{y_2}),
	\]
	that is,
	\[
	\gamma(1-x_1 \overline{y_1}+\alpha x_2 \overline{y_2} )=\overline{x_2} y_2+\alpha-\alpha \overline{x_1} y_1.
	\]
	This happens for every $\gamma\in \T$ if and only if
	\[
	1-x_1 \overline{y_1}+\alpha x_2 \overline{y_2}=0
	\]
	and
	\[
	\overline{x_2} y_2+\alpha-\alpha \overline{x_1} y_1=0,
	\]
	which are equivalent. This is the case, for example, if $\abs{x_1}<1$, $x_2=\sqrt{1-\abs{x_1}^2}$, $y_1=x_1$, and $y_2=-\alpha x_2=-\alpha \sqrt{1-\abs{x_1}^2}$. In this case,
	\[
	f(x_1,\gamma)\coloneqq L_{(x_1,x_2),(y_1,y_2),\gamma}=\left(\begin{matrix}
		\abs{x_1}^2( 1+\overline \alpha \gamma )-\overline\alpha\gamma & x_1\sqrt{1-\abs{x_1}^2} ( 1+\overline \alpha \gamma )\\
		-\overline{x_1}\sqrt{1-\abs{x_1}^2} ( \alpha+\gamma ) &  \abs{x_1}^2(\gamma+\alpha)- \alpha 
	\end{matrix}\right).
	\]
	We wish to show that $f\colon \Db\times \T\to U(2)$ has rank $3$ somewhere (hence in a non-empty open set). To this aim, it is convenient to consider the mapping $g\colon (0,1)\times (0,2\pi)\times (0,2\pi)\ni(\rho,\theta, \phi)\mapsto f(\rho \ee^{i\theta}, \ee^{i\phi})$, so that
	\[
	g(\rho,\theta,\phi)=\left(\begin{matrix}
		\rho^2( 1+\overline \alpha \ee^{i \phi} )-\overline\alpha \ee^{i \phi} & \rho\sqrt{1-\rho^2} \ee^{i \theta} ( 1+\overline \alpha \ee^{i \phi})\\
		-\rho\sqrt{1-\rho^2} \ee^{-i\theta} ( \alpha+\ee^{i \phi}) & \rho^2(\ee^{i \phi}+\alpha)- \alpha 
	\end{matrix}\right).
	\]
	Then, the Jacobian determinant of the real part of the $(1,2)$-component of $g$ and of the real  and imaginary parts of the $(2,2)$-component of $g$ is
	\[
	\begin{split}
		&\det\left(\begin{matrix}
			\frac{1-2 \rho^2}{\sqrt{1-\rho^2}}\Re(\ee^{i\theta}+\overline \alpha \ee^{i(\theta+ \phi)}) & -\rho\sqrt{1-\rho^2}\Im(\ee^{i\theta}+\overline \alpha \ee^{i(\theta+ \phi)}) & -\rho\sqrt{1-\rho^2}\Im(\overline \alpha \ee^{i(\theta+ \phi)}) \\
			2\rho (\cos \phi+\Re \alpha) & 0 & -\rho^2 \sin \phi\\
			2\rho (\sin \phi+\Im \alpha) & 0 & \rho^2 \cos\phi
		\end{matrix}\right)\\
		&\qquad= 2\rho^4\sqrt{1-\rho^2}\Im(\ee^{i\theta}+\overline \alpha \ee^{i(\theta+ \phi)})  (1+\Re \overline \alpha \ee^{i \phi}),
	\end{split}
	\]
	which is non-zero for $\ee^{i\phi}\neq -\alpha$ and $\ee^{-i\theta}\neq \pm \frac{1+\overline \alpha \ee^{i \phi}}{\abs{1+\overline \alpha \ee^{i \phi}}}$ (this latter condition may be removed replacing the real part of the $(1,2)$-component of $g$ with the imaginary part of the same component, if we are only interested in proving that $g$ has rank $3$). 
	Consequently, $V_\alpha$  contains a bundle of tori in the sense of Definition~\ref{def:10}, so that $H^2(\mi_\alpha)\neq L^2(\mi_\alpha)$ for every $\alpha\in \T$, thanks to Proposition~\ref{prop:40}.
\end{ex}


\begin{thebibliography}{99} 
	
	\bibitem{Aleksandrov1}
	Aleksandrov, A.\ B., Multiplicity of Boundary Values of Inner Functions, \emph{Izv.\ Akad.\ Nauk Armyan.\ SSR Ser.\ Math.} \textbf{22} (1987), p.~490--503, 515.
	
	\bibitem{Aleksandrov2}
	Aleksandrov, A.\ B., Inner Functions and Related Spaces of Pseudocontinuable Functions, \emph{Zap.\ Nauchn.\ Sem.\ Leningrad.\ Otdel.\ Mat.\ Inst.\ Steklov.\ (LOMI)} \textbf{170} (1989), \emph{Issled.\ Linein Oper.\ Teorii Funktsii.} 17, p.~7--33, 321; translations in \emph{J.\ Soviet Math.} \textbf{63} (1993), p.~115--159.
	
	\bibitem{Aleksandrov3}
	Aleksandrov, A.\ B., On the Maximum Principle for Pseudocontinuable Functions,  \emph{Zap.\ Nauchn.\ Sem.\ S.-Peterburg.\ Otdel.\ Mat.\ Inst.\ Steklov. (POMI)} \textbf{217} (1994),  \emph{Issled.\ po Linein Oper.\ i Teor.\ Funktsii.} 22, p.~16--25, 218; translation in \emph{J.\ Math.\ Sci.\ (New York)} \textbf{85} (1997), p.~1767--1772.
	
	\bibitem{Aleksandrov4}
	Aleksandrov, A.\ B., On the Existence of Angular Boundary Values for Pseudocontinuable Functions, \emph{Zap.\ Nauchn.\ Sem.\ S.-Peterburg.\ Otdel.\ Mat.\ Inst.\ Steklov. (POMI)} \textbf{222} (1995),  \emph{Issled.\ po Linein Oper.\ i Teor.\ Funktsii.} 23, p.~5--17, 307; translation in \emph{J.\ Math.\ Soc.\ New York} \textbf{87} (1997), p.~3871--3787.
	
	\bibitem{Aleksandrov5}
	Aleksandrov, A.\ B., Isometric Embeddings of Coinvariant Subspaces of the Shift Operator,  \emph{Zap.\ Nauchn.\ Sem.\ S.-Peterburg.\ Otdel.\ Mat.\ Inst.\ Steklov. (POMI)} \textbf{232} (1996),  \emph{Issled.\ po Linein Oper.\ i Teor.\ Funktsii.} 24, p.~5--15, 213; translation in \emph{J.\ Math.\ Sci.\ (New York)} \textbf{92} (1998), p.~3543--3549.
	
	
	\bibitem{AleksandrovDoubtsov}
	Aleksandrov, A.\ B., Doubtsov, E., Clark Measures on the Complex Spheres, \emph{J.\ Funct.\ Anal.} \textbf{278} (2020), 108314.
	
	\bibitem{AleksandrovDoubtsov2}
	Aleksandrov, A.\ B., Doubtsov, E., Clark Measures and de Branges--Rovnyak Spaces in Several Variables, \emph{Complex Var.\ Elliptic} \textbf{68} (2023), p.~212--221. 
	
	\bibitem{ABBCS}
	Anderson, J.\ T., Bergqvist, L., Bickel, K., Cima, J.\ A., Sola, A.\ A., Clark Measures for Rational Inner Functions II: General Bidegrees and Higher Dimensions, pre-print 2023, arXiv:2303.11248.

	
	\bibitem{BCS}
	Bickel, K., Cima, J.\ A., Sola, A.\ A., Clark Measures for Rational Inner Functions, \emph{Michigan Math. J.} \textbf{73} (2023), p.~1021--1057.
	
	\bibitem{BPS}
	Bickel, K., Pascoe, J.\ E., Sola, A., Singularities of Rational Inner Functions in Higher Dimensions, \emph{Amer.\ J.\ Math.} \textbf{144} (2022), p.~1115--1157. 
	
	
	\bibitem{BoundedClark}
	Calzi, M., Clark Measures on Bounded Symmetric Domains, \emph{Complex Anal.\ Oper.\ Th.} \textbf{18} (2024), 132.
	
	\bibitem{SiegelClark}
	Calzi, M., Positive Pluriharmonic Functions on Symmetric Siegel Domains, pre-print 2024,  	arXiv:2403.05436.
	
	\bibitem{CimaMathesonRoss}
	Cima, J.\ A., Matheson, A.\ L., Ross, W.\ T., \emph{The Cauchy Transform}, AMS,  2006.
	
	\bibitem{Clark}
	Clark, D.\ N., One Dimensional Perturbation of Restricted Shifts, \emph{J.\ Analyse Math.} \textbf{25} (1972), p.~169--191.
	
	
	\bibitem{Coste}
	Coste, M., Real Algebraic Sets, https://perso.univ-rennes1.fr/michel.coste/polyens/RASroot.pdf.
	
	\bibitem{Doubtsov}
	Doubtsov, E., Clark Measures on the Torus, \emph{Proc.\ Amer.\ Math.\ Soc.} \textbf{148} (2020), p.~	2009--2017.
	
	\bibitem{FarautKoranyi}
	Faraut, J., Kor\'anyi, A., Function Spaces and Reproducing Kernels on Bounded Symmetric Domains, \emph{J.\ Funct.\ Anal.} \textbf{88} (1990), pp.~64--89.
	 
	 \bibitem{FarautKoranyi2}
	 Faraut, J., Kor\'anyi, A., \emph{Analysis on Symmetric Cones}, Clarendon Press, Oxford, 1994.
	 
	
	\bibitem{Jury1}
	Jury, M.\ T., Operator-Valued Herglotz Kernels and Functions of Positive Real Part on the Ball, \emph{Complex Anal.\ Oper.\ Theory} \textbf{4} (2010), p.~301--317.
	
	\bibitem{Jury2}
	Jury, M.\ T., Clark Theory in the Drury--Arveson Space, \emph{J.\ Funct.\ Anal.} \textbf{266} (2014), p.~3855--3893.
	
	\bibitem{JuryMartin}
	Jury, M.\ T., Martin, R.\ T.\ W., Aleksandrov--Clark Theory for Drury--Arveson Space,  \emph{Integr.\ Equ.\ Oper.\ Theory} \textbf{90} (2018), 45.

	\bibitem{KoranyiVagi}
	Kor\'anyi, A., V\'agi, S., Rational Inner Functions on Bounded Symmetric Domains, \emph{Trans.\ Amer.\ Math.\ Soc.} \textbf{254} (1979), p.~179--193.
	
	\bibitem{KoranyiPukanszki}
	Kor\'anyi, A., Puk\'anszky, L., Holomorphic Functions with Positive Real Part on Polycylinders, \emph{Trans.\ Amer.\ Math.\ Soc.} \textbf{108} (1963), pp.~449--456.
	
	\bibitem{Loos}
	Loos, O., \emph{Bounded Symmetric Domains and Jordan Pairs}, University of California, Irvine, 1977.
	 
	\bibitem{MackeyMellon}
	Mackey, M., Mellon, P., Angular Derivatives on Bounded Symmetric Domains, \emph{Isr.\ J.\ Math.} \textbf{138} (2003), p.~291--315.

	\bibitem{PoltoratskiSarason}
	Poltoratski, S.\ G., Sarason, D., Aleksandrov--Clark Measures, \emph{Contemp.\ Math.} \textbf{393} (2006), p.~1--14.
	
\end{thebibliography}
\end{document}